\documentclass[a4paper,12pt]{amsart}

\usepackage{amscd}
\usepackage{amssymb, color}

\pagestyle{plain}


\oddsidemargin 1.6cm
\evensidemargin 1.6cm
\textwidth 15.3cm
\hoffset -1cm

\theoremstyle{plain}
  \newtheorem{thm}{Theorem}
  \newtheorem{prop}[thm]{Proposition}
  \newtheorem{lem}[thm]{Lemma}



  \newtheorem{thqt}{Theorem}


\theoremstyle{definition}

  \newtheorem{defi}{Definition}

\numberwithin{equation}{section}

\author[T. Yamazaki]{Takeaki Yamazaki}
\address{%
Department of Electrical, Electronic and Computer Engineering,
Toyo University,
Kawagoe-Shi, Saitama, 350-8585, Japan.
} \email{t-yamazaki@toyo.jp}

\keywords{Positive definite operator; operator mean;
operator monotone function; power mean;
Karcher equation; Karcher mean;
generalized Karcher equation;
relative operator entropy;
Tsalise relative operator entropy;
the Ando-Hiai inequality.}

\subjclass[2010]{Primary 47A63. Secondary 47A64.}

\title{The Ando-Hiai inequalities for 
the solution of the generalized Karcher equation
and related results}

\thanks{This research is supported by 
the INOUE ENRYO Memorial Grant, TOYO University.}
\begin{document}

\begin{abstract}
In this paper, we shall give a concrete relation
between the generalized Karcher equation and 
operator means as its solution.
Next, we shall show two types of the 
Ando-Hiai inequalities 
for the solution of the generalized Karcher equation. 
In this discussion, we also give a property of 
relative operator entropy.
\end{abstract}

\maketitle

\section{Introduction}
The theory of operator means was firstly considered 
in \cite{PW1975}. In that paper, the 
operator geometric mean has been defined.
Then the axiom of two-variable operator means
was introduced 
in \cite{KA1980}. 
However, this axiom cannot be extended to more than two operators, especially,
many people attempted to define 
operator geometric mean of $n$ operators
with natural properties
but they failed for a long time.
For this problem, 
the first solution was given in \cite{ALM2004}.
In that paper, a geometric mean of $n$ positive 
definite matrices with 10 nice properties was defined.
Since then operator geometric means 
have been discussed in many papers, for example,
\cite{BMP2010, IN2009, IN2010}.

Especially, we pay attention to a geometric mean of 
$n$ positive definite matrices which is
defined by a solution of a matrix equation  
in \cite{M2005}.
Then it was shown in \cite{BH2006} that 
this geometric mean can be 
defined by using the 
property that the set of all positive definite matrices 
is a Riemannian manifold with non-positive curvature.

For bounded linear operators on a 
Hilbert space case,
although, we can not define the geometric mean 
of $n$ operators in the same way as in \cite{BH2006},
it can be defined as a solution of the same operator
 equation as in \cite{M2005}, see \cite{LL2014}. 
This operator equation is called the Karcher equation, 
and the geometric mean is sometimes called 
the Karcher mean.
It is shown in \cite{BK2012, LL2011} 
that the Karcher mean satisfies all 10 properties 
stated in \cite{ALM2004}.
Moreover the Karcher mean satisfies 
the Ando-Hiai inequality 
-- one of the most important operator inequalities 
in the operator theory -- 
\cite{Y2012}, and 
the geometric mean 
which satisfies all 10 properties 
stated in \cite{ALM2004} and the Ando-Hiai 
inequality should be the Karcher mean \cite{Y2012}.
Hence we think that the Karcher mean is 
very important in operator theory.

As an extension of the Karcher mean, the power mean 
was defined in \cite{LP2012}.
 It interpolates the arithmetic, the 
geometric (Karcher) and 
the harmonic means, and it is defined by a solution of 
an operator equation. It is known that some operator 
inequalities which are 
relating to the power mean have been obtained in \cite{LY2013}.
In a recent year, P\'alfia \cite{P2016}
generalized the Karcher mean
by generalizing the Karcher equation.
Then he obtained various kinds of operator means of 
$n$ operators. 
We can obtain the Karcher and the power means 
as special cases of the new operator means.
But we have not known any concrete relation 
between the generalized Karcher equation and operator 
means, yet, i.e., we have not known 
which operator mean can be 
obtained from a given generalized Karcher equation.

In this paper, we shall give a concrete relation
between the generalized Karcher equation and 
operator means.
In fact, we will give an inverse function of a 
representing function of an operator mean which 
is derived from a given generalized Karcher equation.
In this discussion, the representing function of 
the relative operator entropy is very important.
Next, we shall give the Ando-Hiai type 
operator inequalities.
Here we shall show two types of Ando-Hiai inequalities,
and we shall give a property of the 
relative operator entropy. 
For the first type of the Ando-Hiai inequality, 
we shall give an Ando-Hiai type
operator inequality for a given operator mean.
The second one discusses an equivalence 
condition for operator means to
satisfy the Ando-Hiai type inequality.
The Ando-Hiai inequality was shown in \cite{AH1994},
firstly.
Then it has been extended to the Karcher and 
the power means  
as the first type of the Ando-Hiai inequality
in \cite{LL2014, LP2012, LY2013, Y2012}. 
On the other hand, the second type was firstly 
considered in \cite{W2014}. 
In that paper, an arbitrary operator mean 
of $2$ operators was considered.
In this paper, we shall generalize these results 
into several operator means of $n$ operators
which are derived from
the generalized Karcher equation.
At the same time, we shall study a property of the 
relative operator entropy.

This paper consists of the following:
In Section 2, we shall introduce some basic notations,
definitions and theorems which will be used in the paper.
In Section 3, we shall obtain a relation among 
the generalized Karcher equation, 
relative operator entropy and operator means.
In Section 4, we shall show the Ando-Hiai type 
inequalities for operator means which are 
derived from the solution of the 
generalized Karcher equation.
In the last section, we shall 
give a conjecture and a discussion.

\section{Preliminaries}
In what follows let $\mathcal{H}$ be a Hilbert space
with inner product $\langle \cdot, \cdot\rangle$, 
and $\mathcal{B(H)}$ be the set of all 
bounded linear operators 
on $\mathcal{H}$.
An operator $A\in \mathcal{B(H)}$ is positive semi-definite if $\langle Ax,x\rangle \geq 0$
holds for all $x\in \mathcal{H}$.
If $A$ is positive semi-definite, we denote 
$A\geq 0$. 
Let $\mathcal{PS}\subset 
\mathcal{B(H)}$ 
be the set of all positive semi-definite,
and let $\mathcal{P}\subset \mathcal{B(H)}$
be a cone of invertible positive semi-definite
 operators.
For self-adjoint operators $A$ and $ B$, 
$A\geq B$ is defined by $A-B\geq 0$.
%
%
%
%
A real-valued function $f$ 
defined on an interval $I$ 
satisfying 
$$ B\leq A \ \Longrightarrow \ f(B)\leq f(A) $$
for all self-adjoint operators $A,B\in 
\mathcal{B(H)}$ such that
$\sigma(A), \sigma(B)\subset I$ 
is called an operator monotone function,
where $\sigma(X)$ denotes the spectrum of
$X\in \mathcal{B(H)}$.

\subsection{Operator mean}

\begin{defi}[Operator mean, \cite{KA1980}]
Let $\sigma: \mathcal{PS}^{2} \to \mathcal{PS}$ be
a binary operation. If $\sigma$ satisfies the 
following four conditions, then $\sigma$ is called
an operator mean.
\begin{itemize}
\item[(1)] If $A\leq C$ and $B\leq D$, then 
$\sigma(A,B)\leq \sigma(C,D)$,
\item[(2)] $X^{*}\sigma(A,B)X\leq 
\sigma(X^{*}AX, X^{*}BX)$ for all $X\in 
B(\mathcal{H})$,
\item[(3)] $A_{n}\downarrow A$ and 
$B_{n}\downarrow B$ imply $\sigma (A_{n}, B_{n})
\downarrow \sigma (A,B)$ in the 
strong operator topology,
\item[(4)] $\sigma(I,I)=I$, where $I$ means the 
identity operator in $\mathcal{B(H)}$.
\end{itemize}
\end{defi}

We notice that if $X$ is invertible in (2),
then equality holds.

\begin{thqt}[\cite{KA1980}]
Let $\sigma$ be an operator mean.
Then there exists a non-negative operator monotone 
function $f$ on $(0, \infty)$ such that $f(1)=1$ and 
$$ \sigma(A,B)=A^{\frac{1}{2}}f(A^{\frac{-1}{2}}
BA^{\frac{-1}{2}})A^{\frac{1}{2}} $$
for all $A\in \mathcal{P}$ and $B\in \mathcal{PS}$.
A function $f$ is called the representing 
function of an operator mean $\sigma$.
\end{thqt}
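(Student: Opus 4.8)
The plan is to follow the classical Kubo--Ando argument. The starting point is a reduction: since equality holds in~(2) for invertible $X$ (apply~(2) to $X$ and to $X^{-1}$), one has the congruence identity $\sigma(X^{*}AX,X^{*}BX)=X^{*}\sigma(A,B)X$ for every invertible $X$, and taking $X=A^{\frac{1}{2}}$ for $A\in\mathcal P$ gives $\sigma(A,B)=A^{\frac{1}{2}}\sigma(I,A^{-\frac{1}{2}}BA^{-\frac{1}{2}})A^{\frac{1}{2}}$, so everything is reduced to the one-variable map $T\mapsto\sigma(I,T)$ on $\mathcal{PS}$. I would then define the scalar function $f$ by $\sigma(I,tI)=f(t)I$ for $t\ge0$: this is legitimate because, by the congruence identity with $X$ unitary, $\sigma(I,tI)$ commutes with every unitary and hence is a scalar. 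From the axioms one reads off $f\ge0$, $f(1)=1$ (by~(4)), $f$ nondecreasing (by~(1)), and, using the homogeneity trick with $X=\sqrt{t}\,I$, that $\sigma(tI,tI)=tI$; together with the order-continuity axiom~(3) this also gives $\sigma(0,0)=0$ and $\sigma(I,0)=f(0)I$, where $f(0):=\lim_{t\downarrow0}f(t)$.

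The heart of the argument is to show $\sigma(I,T)=f(T)$ (functional calculus) when $T$ has finite spectrum; I would first carry out the model case $T=P$, a projection. Applying the congruence identity to the reflection unitary $U=2P-I$, which commutes with $I$ and $P$, shows that $\sigma(I,P)$ commutes with $P$, so it is block diagonal, $\sigma(I,P)=C\oplus D$ along $P\mathcal H\oplus(I-P)\mathcal H$. On the $(I-P)$-block, the transformer inequality~(2) with $X=I-P$ gives $(I-P)\sigma(I,P)(I-P)\le\sigma(I-P,0)\le\sigma(I,0)=f(0)I$, hence $D\le f(0)I_{(I-P)\mathcal H}$, while monotonicity in the second variable gives $\sigma(I,P)\ge\sigma(I,0)=f(0)I$, hence $D\ge f(0)I_{(I-P)\mathcal H}$; so $D=f(0)I_{(I-P)\mathcal H}$. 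For the $C$-block I would use the limiting congruence $X_{\varepsilon}=P+\varepsilon(I-P)$, which is invertible, satisfies $X_{\varepsilon}PX_{\varepsilon}=P$ and $X_{\varepsilon}^{2}=P+\varepsilon^{2}(I-P)\downarrow P$; the congruence identity yields $X_{\varepsilon}\sigma(I,P)X_{\varepsilon}=\sigma(X_{\varepsilon}^{2},P)=\sigma(P+\varepsilon^{2}(I-P),P)$, and as $\varepsilon\downarrow0$ the left side tends to $C\oplus0$ while the right side decreases, by~(3), to $\sigma(P,P)$; since $P=P\,\sigma(I,I)\,P\le\sigma(P,P)\le\sigma(I,I)=I$, the $P\mathcal H$-part of $\sigma(P,P)$ is $I_{P\mathcal H}$, so $C=I_{P\mathcal H}$. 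Thus $\sigma(I,P)=f(1)P+f(0)(I-P)=f(P)$. For a general finite-spectrum $T=\sum_i t_iP_i$ the reflection trick applied to all the $P_i$ shows $\sigma(I,T)$ is diagonal with respect to the $P_i$, and two applications of the limiting-congruence device (one to pass to the subspace $P_i\mathcal H$, one to reach the scalar value $f(t_i)$) give the $i$-th block equal to $f(t_i)I_{P_i\mathcal H}$, i.e.\ $\sigma(I,T)=f(T)$.

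Finally, for an arbitrary $T\in\mathcal{PS}$ I would approximate it from above by finite-spectrum operators $T_n\downarrow T$ coming from its spectral resolution; then $\sigma(I,T_n)\downarrow\sigma(I,T)$ by~(3), while $f(T_n)\to f(T)$ because $f$ is already operator monotone on the finite-spectrum operators and is a scalar monotone continuous function on $(0,\infty)$, so it behaves well under such limits. Hence $\sigma(I,T)=f(T)$ for all $T\ge0$, and conjugating back by $A^{\frac{1}{2}}$ gives the asserted formula for all $A\in\mathcal P$ and $B\in\mathcal{PS}$; operator monotonicity of $f$ on $(0,\infty)$ then follows as a byproduct, since for $0<S\le T$ axiom~(1) gives $f(S)=\sigma(I,S)\le\sigma(I,T)=f(T)$. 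I expect the main obstacle to be the finite-spectrum step, and within it the production of the \emph{lower} bounds for the diagonal blocks of $\sigma(I,T)$: the transformer inequality alone yields only one-sided block comparisons, and it is the limiting-congruence device (conjugation by the invertible $X_\varepsilon$ followed by the order-continuity axiom) that breaks the symmetry and pins the blocks down to the scalar values $f(t_i)$.
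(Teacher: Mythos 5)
This statement is the paper's Theorem A, imported verbatim from Kubo--Ando \cite{KA1980} with no proof given in the paper, so there is no in-paper argument to compare against; your proposal is essentially a correct reconstruction of the classical Kubo--Ando route: reduction by congruence invariance (equality in axiom (2) for invertible $X$) to the one-variable map $T\mapsto\sigma(I,T)$, identification of the scalar function $f$ from unitary invariance of $\sigma(I,tI)$, the projection case via the reflection unitary $2P-I$, the transformer inequality with $X=I-P$ and $X=P$, the limiting congruence $X_\varepsilon=P+\varepsilon(I-P)$, and finally a decreasing spectral discretization combined with axiom (3). I checked the details you compress into ``two applications of the limiting-congruence device'': the first gives $P_i\sigma(I,T)P_i=\sigma(P_i,t_iP_i)$, and the second, comparing $\sigma(P_i,t_iP_i)$ with $Y_\delta\sigma(I,t_iI)Y_\delta=f(t_i)\bigl(P_i+\delta^2(I-P_i)\bigr)$ from above and with $P_i\sigma(I,t_iI)P_i=f(t_i)P_i$ from below, pins the block to $f(t_i)I_{P_i\mathcal H}$; so the finite-spectrum step does go through as you describe.

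The one soft spot is the parenthetical claim that $f$ is continuous on $(0,\infty)$. Axiom (3) gives only right-continuity ($t_n\downarrow t$ implies $f(t_n)\downarrow f(t)$). This is in fact enough for your final limit, because your approximants are upper spectral discretizations $T_n=\phi_n(T)\downarrow T$, so $f\circ\phi_n\downarrow f$ pointwise and $f(T_n)\downarrow f(T)$ in the (Borel) functional calculus, matching $\sigma(I,T_n)\downarrow\sigma(I,T)$. But to conclude that the resulting $f$ is an operator monotone function in the usual sense -- in particular continuous, so that the continuous functional calculus in the statement is meaningful -- you need one more observation. A convenient one inside your framework: apply the transformer inequality with the rank-one contraction $X=vw^{*}$, where $v=\sqrt{\lambda}\,e_1+\sqrt{1-\lambda}\,e_2$, to the already-established identities $\sigma(I,sP+t(I-P))=f(s)P+f(t)(I-P)$ and $\sigma(Q,cQ)=f(c)Q$; this yields $\lambda f(s)+(1-\lambda)f(t)\le f(\lambda s+(1-\lambda)t)$, i.e.\ $f$ is concave, hence continuous on $(0,\infty)$ (alternatively, cite that $2$-monotone functions are continuous). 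With that small patch your argument is complete and coincides with the proof the paper refers to in \cite{KA1980}.
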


Especially, if the assumption $f(1)=1$ is removed, 
then $\sigma(A,B)$ is called a solidarity \cite{FFS1990} 
or a perspective \cite{EH2014}.
Let $\varepsilon$ be a positive real number. Then
we have $A_{\varepsilon}=A+\varepsilon I,
B_{\varepsilon}=B+\varepsilon I\in \mathcal{P}$
for $A,B\in \mathcal{PS}$, 
and we can compute an operator mean 
$\sigma(A,B)$ by 
$ \sigma(A,B)=\lim_{\varepsilon\searrow 0}
\sigma(A_{\varepsilon},B_{\varepsilon}). $
We note that for an operator mean 
$\sigma$ with a representing function $f$,
$f'(1)=\lambda
\in [0,1]$ (cf. \cite{H2013, ParXiv1208.5603}),
and we call $\sigma$ a $\lambda$-weighted operator
mean. 
Typical examples of operator means 
are the $\lambda$-weighted geometric  
and $\lambda$-weighted power means. These 
representing functions are $f(x)=x^{\lambda}$ and 
$f(x)=[1-\lambda+\lambda x^{t}]^{\frac{1}{t}}$, 
respectively, 
where $\lambda\in [0,1]$
and $t\in[-1,1]$ (in the case $t=0$, we consider 
$t\to 0$). The weighted power mean interpolates 
the arithmetic, the geometric and the harmonic means 
by putting $t=1,0,-1$, respectively.
In what follows, the $\lambda$-weighted geometric and 
$\lambda$-weighted power means 
of $A, B\in \mathcal{P}$ are denoted by
$A\sharp_{\lambda} B$ and 
$P_{t}(\lambda;A,B)$, respectively,
i.e., 
\begin{align*}
A\sharp_{\lambda}B  & 
= A^{\frac{1}{2}}(A^{\frac{-1}{2}}B
A^{\frac{-1}{2}})^{\lambda}A^{\frac{1}{2}},\\
 P_{t}(\lambda;A,B)  & =
A^{\frac{1}{2}}\left[1-\lambda+\lambda(A^{\frac{-1}{2}}B
A^{\frac{-1}{2}})^{t}\right]^{\frac{1}{t}}A^{\frac{1}{2}}.
\end{align*}

\subsection{The Karcher and the power means}

Geometric and power means of two operators 
can be extended 
to more than two operators
via the solution of operator equations as follows.
Let $n$ be a natural number, and let 
$\Delta_{n}$ be a set of all $n$-dimensional 
probability vectors, i.e., 
$$ \Delta_{n}=\{\omega=(w_{1},...,w_{n})\in (0,1)^{n}|\ 
\sum_{i=1}^{n}w_{i}=1\}. $$

\begin{defi}[The Karcher mean, 
\cite{BH2006, LL2014, M2005}]
Let $\mathbb{A}=(A_{1},...,A_{n})\in \mathcal{P}^{n}$ and 
$\omega=(w_{1},...,w_{n})\in \Delta_{n}$. Then 
the weighted Karcher mean $\Lambda(\omega;
\mathbb{A})$ is defined by a unique positive solution 
$X\in \mathcal{P}$ of 
the following operator equation;
$$ \sum_{i=1}^{n}w_{i}\log (X^{\frac{-1}{2}}A_{i}
X^{\frac{-1}{2}})=0. $$
\end{defi}
 
The Karchar mean of $2$ operators 
coincides with the geometric mean of $2$ operators,
i.e., for each $A,B\in \mathcal{P}$ and 
$\lambda \in [0,1]$, the solution of 
$$ (1-\lambda)\log (X^{\frac{-1}{2}}AX^{\frac{-1}{2}})+
\lambda\log (X^{\frac{-1}{2}}BX^{\frac{-1}{2}})=0 $$
is $X=A\sharp_{\lambda}B=A^{\frac{1}{2}}(A^{\frac{-1}{2}}BA^{\frac{-1}{2}})^{\lambda}
A^{\frac{1}{2}}$, as easily seen.
We can consider the Karcher mean as a geometric mean of 
$n$ operators.
Properties of the Karcher mean were shown in 
\cite{LL2014}.

The following power mean is an extension of the
Karcher mean which interpolates the arithmetic,
the harmonic and the Karcher (geometric) means.

\begin{defi}[The power mean, 
\cite{LL2014, LP2012}]
Let $\mathbb{A}=(A_{1},...,A_{n})\in \mathcal{P}^{n}$ and 
$\omega=(w_{1},...,w_{n})\in \Delta_{n}$. Then for 
$t\in [-1,1]$, the weighted power mean 
$P_{t}(\omega; \mathbb{A})$ is 
defined by a unique positive solution of 
the following operator equation;
$$ \sum_{i=1}^{n}w_{i}
(X^{\frac{-1}{2}}A_{i}X^{\frac{-1}{2}})^{t}=I. $$
\end{defi}

In fact, put $t=1$ and $t=-1$, then 
the arithmetic and harmonic means are easily obtained, 
respectively. Also let $t\to 0$. Then we have the 
Karcher mean \cite{LL2014, LP2012}.
Properties of the power mean were shown in 
\cite{LL2014, LP2012}.

Recently, the above operator equations are 
generalized as follows.
Let $\mathcal{M}$ be the set of all 
operator monotone functions on $(0,\infty)$, and let
$$ \mathcal{L}=\{ g\in \mathcal{M}|\ 
g(1)=0\text{ and }g'(1)=1\}.$$

\begin{defi}[Generalized Karcher Equation (GKE), 
\cite{P2016}]
Let $g\in \mathcal{L}$, $\mathbb{A}=
(A_{1},...,A_{n})\in \mathcal{P}^{n}$ and 
$\omega=(w_{1},...,w_{n})\in \Delta_{n}$. Then
the following operator equation is 
called the Generalized Karcher Equation
(GKE).
\begin{equation}
\sum_{i=1}^{n}w_{i}g(X^{\frac{-1}{2}}A_{i}
X^{\frac{-1}{2}})=0. 
\label{GKE}
\end{equation}
\end{defi}

\begin{thqt}[\cite{P2016}]
Any GKE has a unique positive solution 
$X\in \mathcal{P}$.
\end{thqt}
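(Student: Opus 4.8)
The plan is to reduce the GKE to a fixed-point problem on the cone $\mathcal{P}$ and to exploit the strict monotonicity built into the class $\mathcal{L}$. First I would rewrite \eqref{GKE} in a form amenable to fixed-point analysis: since $g\in\mathcal{L}$ is operator monotone with $g(1)=0$ and $g'(1)=1$, the function $x\mapsto x - g(x)$ (or more precisely an appropriate normalization such as $x\mapsto e^{g(x)}$-type or a resolvent-type map) is again operator monotone near the relevant spectra, so that a solution $X$ of \eqref{GKE} is a fixed point of a map $F_{\mathbb A,\omega}\colon \mathcal{P}\to\mathcal{P}$ built by congruence-transforming the $A_i$ by $X^{-1/2}$, applying $g$, averaging with the weights $w_i$, and then ``integrating back.'' The congruence-invariance $F(M^*XM)=M^*F(X)M$ for invertible $M$ (inherited from the transformer-type structure of the equation, cf. condition (2) of the operator-mean axioms) will be the structural feature I lean on throughout.

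Next I would equip $\mathcal{P}$ with the Thompson metric $d_T(X,Y)=\|\log(X^{-1/2}YX^{-1/2})\|$, under which $\mathcal{P}$ is a complete metric space and congruence transformations are isometries. The key step is to show that $F_{\mathbb A,\omega}$ is either a strict contraction or at least nonexpansive with a uniqueness-forcing property; for operator monotone $g$ one has the standard estimate that $\|g(A)-g(B)\|$ is controlled by $d_T(A,B)$ with a factor governed by $g'$, and the normalization $g'(1)=1$ together with the averaging $\sum w_i=1$ is exactly what pins the contraction constant at most $1$. To get \emph{strict} contraction (hence existence \emph{and} uniqueness via Banach's fixed-point theorem) I would either pass to a compact order-interval $[\alpha I,\beta I]$ containing all $A_i$ — on which $g'$ is bounded away from its critical value in the right direction — and show $F$ maps this interval into itself, or invoke the Loewner theory giving the integral representation of $g$ to split the averaged map into pieces each of which is a genuine contraction off the diagonal. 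One should first dispose of the finite-dimensional / matrix case (where $\mathcal{P}$ with $d_T$ behaves very favorably) and then bootstrap to general $\mathcal{B(H)}$ by a spectral/approximation argument, or cite the corresponding machinery from \cite{LL2014, P2016}.

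The main obstacle I anticipate is precisely the contraction estimate in the operator setting: operator monotonicity of $g$ does \emph{not} by itself give a Lipschitz bound on $X\mapsto g(X^{-1/2}AX^{-1/2})$ in the Thompson metric with constant strictly less than one, and the naive bound only yields nonexpansiveness. Overcoming this requires using the full integral representation $g(x)=\int_0^\infty \frac{x(1+s)}{x+s}\,d\mu(s)$ (valid after the normalizations, with $\mu$ a probability measure determined by $g'(1)=1$), observing that each elementary term $x\mapsto \frac{x(1+s)}{x+s}$ corresponds to a weighted harmonic-type adjustment that is a \emph{strict} Thompson contraction on any bounded order interval, and then averaging. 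A secondary difficulty is the passage from matrices to operators: the solution must be shown to live in the invertible cone $\mathcal{P}$ (not merely in $\mathcal{PS}$), which I would handle by deriving a priori two-sided bounds $\alpha I\le X\le\beta I$ from the corresponding bounds on the $A_i$ via the monotonicity in axiom-style arguments, and then noting these bounds are preserved under the approximation. Once the fixed point is produced and localized in $\mathcal{P}$, uniqueness is immediate from strictness of the contraction on the (necessarily common) order interval in which any solution must lie.
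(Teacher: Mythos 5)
Note first that this theorem is not proved in the paper at all: it is quoted from \cite{P2016}, so the benchmark is P\'alfia's proof, which is substantially more involved than your sketch, and two of your steps fail as written. The most serious gap is that your fixed-point map $F$ is never actually constructed: ``apply $g$, average, and integrate back'' is not a definition. The rearrangement of the GKE into $X=F(X)$ that you implicitly rely on is special to the power-mean generator $g(x)=\frac{x^{t}-1}{t}$, where \eqref{GKE} is equivalent to $X=\sum_{i}w_{i}\,X\sharp_{t}A_{i}$ and the right-hand side is a strict Thompson contraction with ratio $1-t$ (this is exactly how \cite{LP2012, LL2014} proceed); for a general $g\in\mathcal{L}$ no such algebraic reduction exists, and producing a workable iteration is the actual content of the theorem. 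Second, the integral representation you invoke is the wrong one: $\int_{0}^{\infty}\frac{x(1+s)}{x+s}\,d\mu(s)$ represents \emph{non-negative} operator monotone functions normalized by $f(1)=1$ (Kubo--Ando representing functions), whereas $g\in\mathcal{L}$ satisfies $g(1)=0$ and may be unbounded below (e.g.\ $g=\log$). The relevant representation is $g(x)=\int_{[0,1]}\frac{x-1}{sx+1-s}\,d\mu(s)$ with $\mu$ a probability measure; note $\int_{0}^{1}\frac{x-1}{sx+1-s}\,ds=\log x$.

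Even with the correct representation, the heart of your plan --- strictness of the contraction after averaging, localized on a ``compact order interval'' --- does not survive. Order intervals $[\alpha I,\beta I]$ are not compact in infinite dimensions, and the matrices-to-operators ``bootstrap'' you defer to is precisely the hard point: the NPC gradient-flow machinery of \cite{BH2006} is unavailable on $\mathcal{B(H)}$, which is why \cite{LL2014} had to reach the Karcher mean as a limit of power means. More importantly, the contraction ratio attached to the elementary generator $\frac{x-1}{sx+1-s}$ degenerates as $s\searrow 0$ (the arithmetic end), so when $\mu$ charges a neighborhood of $0$ --- the Karcher case $g=\log$, where $\mu$ is Lebesgue measure --- no averaged one-step map is a strict Thompson contraction, and nonexpansiveness alone yields neither existence nor uniqueness. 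This is exactly why \cite{P2016} (with its semigroup counterpart \cite{ParXiv1208.5603}) does not run a one-shot Banach argument but works with deformed/resolvent-type approximating means that \emph{are} strict contractions, proves Cauchy-type estimates in the Thompson metric, and passes to the limit, extracting uniqueness from the contraction properties of the approximants rather than of the equation itself. Your a priori bounds $\alpha I\le X\le\beta I$ (via $1-x^{-1}\le g(x)\le x-1$) are correct but are the easy part; and the estimate you cite comparing $\|g(A)-g(B)\|$ with $d_{T}(A,B)$ is not available here, since $g$ is not positive, $g(A)$ need not lie in $\mathcal{P}$, and the nonexpansiveness results you have in mind apply to positive operator monotone functions, not to the implicit map defined by the GKE.
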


The Karcher and the power means can be obtained 
by putting $g(x)=\log x$ and 
$g(x)=\frac{x^t-1}{t}$ in \eqref{GKE}, respectively.
In what follows $\sigma_{g}
(\omega;\mathbb{A})$ (or $\sigma_{g}$, simply)
denotes the solution $X$ of \eqref{GKE}. 
Properties of $\sigma_{g}$ were obtained in 
\cite{P2016}, here we state some of them
as follows.

\begin{thqt}[\cite{P2016}]\label{prop:GKE mean}
Let $g\in \mathcal{L}$, $\omega\in \Delta_{n}$ and  
$\mathbb{A}=(A_{1},...,A_{n}),
\mathbb{B}=(B_{1},...,B_{n}) \in \mathcal{P}^{n}$.
Then $\sigma_{g}$ satisfies 
the following properties.
\begin{itemize}
\item[(1)] $\sigma_{g}(\omega; \mathbb{A})
\leq \sigma_{g}(\omega; \mathbb{B})$ holds
if $A_{i}\leq B_{i}$ for all $i=1,...,n$,
\item[(2)] $X^{*}\sigma_{g}(\omega; \mathbb{A})X=
\sigma_{g}(\omega; X^{*}\mathbb{A}X)$ for all 
invertible $X\in 
B(\mathcal{H})$, \\
where 
$X^{*}\mathbb{A}X=(X^{*}A_{1}X,...,X^{*}A_{n}X)$,
\item[(3)] $\sigma_{g}$ is continuous 
on $\mathcal{P}^n$, with respect to the 
Thompson metric,
\item[(4)] $\sigma_{g}(\omega; \mathbb{I})=I$, 
where $\mathbb{I}=(I,...,I)$. 
\end{itemize}
Moreover, $\sigma_{g}((1-\lambda,\lambda);A,B)$ is a
$\lambda$-weighted operator mean 
in Kubo and Ando's sense for all
$\lambda\in [0,1]$.
\end{thqt}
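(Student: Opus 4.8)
The plan is to establish the four listed properties — (4) and (2) are immediate, (1) is the real work, (3) is then routine — and to deduce the ``Moreover'' clause from (1), (2), (4) together with Kubo and Ando's representation theorem. For (4), put $X=I$ in \eqref{GKE}: since $g(1)=0$ we get $\sum_{i=1}^{n}w_{i}g(I)=\big(\sum_{i=1}^{n}w_{i}\big)g(1)I=0$, so $X=I$ solves the GKE associated with $\mathbb{I}$, and uniqueness of the positive solution (the solvability theorem stated above) forces $\sigma_{g}(\omega;\mathbb{I})=I$. For (2), set $S=\sigma_{g}(\omega;\mathbb{A})$, let $X\in\mathcal{B(H)}$ be invertible and $Y=X^{*}SX\in\mathcal{P}$. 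Then $Z:=S^{\frac{1}{2}}XY^{\frac{-1}{2}}$ is invertible with $Z^{*}Z=Y^{\frac{-1}{2}}X^{*}SXY^{\frac{-1}{2}}=I$, hence unitary, and $Y^{\frac{-1}{2}}(X^{*}A_{i}X)Y^{\frac{-1}{2}}=Z^{*}\big(S^{\frac{-1}{2}}A_{i}S^{\frac{-1}{2}}\big)Z$. Since the functional calculus commutes with unitary conjugation, $\sum_{i}w_{i}g\big(Y^{\frac{-1}{2}}(X^{*}A_{i}X)Y^{\frac{-1}{2}}\big)=Z^{*}\big(\sum_{i}w_{i}g(S^{\frac{-1}{2}}A_{i}S^{\frac{-1}{2}})\big)Z=0$, so $Y$ solves the GKE for $X^{*}\mathbb{A}X$ and uniqueness yields $\sigma_{g}(\omega;X^{*}\mathbb{A}X)=X^{*}\sigma_{g}(\omega;\mathbb{A})X$.

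For (1) I would reduce to a subsolution comparison. If $A_{i}\leq B_{i}$ for all $i$ and $S=\sigma_{g}(\omega;\mathbb{A})$, then $S^{\frac{-1}{2}}A_{i}S^{\frac{-1}{2}}\leq S^{\frac{-1}{2}}B_{i}S^{\frac{-1}{2}}$, so operator monotonicity of $g$ and $\sum_{i}w_{i}g(S^{\frac{-1}{2}}A_{i}S^{\frac{-1}{2}})=0$ give $\sum_{i}w_{i}g(S^{\frac{-1}{2}}B_{i}S^{\frac{-1}{2}})\geq 0$; hence it suffices to show that
\[
\sum_{i=1}^{n}w_{i}\,g\big(Y^{\frac{-1}{2}}B_{i}Y^{\frac{-1}{2}}\big)\ \geq\ 0\qquad\Longrightarrow\qquad Y\ \leq\ \sigma_{g}(\omega;\mathbb{B})
\]
for $Y\in\mathcal{P}$, and then to apply it with $Y=S$. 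This implication is the main obstacle: the map $Y\mapsto\sum_{i}w_{i}g(Y^{\frac{-1}{2}}B_{i}Y^{\frac{-1}{2}})$ is in general not order reversing, so one cannot simply compare its value with the one at $\sigma_{g}(\omega;\mathbb{B})$, and there is no obvious way to linearise the equation in $Y$. The route I would follow is the one underlying the solvability theorem: realise $\sigma_{g}(\omega;\mathbb{B})$ as the unique fixed point of a self-map of $\mathcal{P}$ that is simultaneously order preserving and nonexpansive for the Thompson metric, note that the displayed subsolution inequality forces one iteration step to move $Y$ upward, and let a monotone-iteration sandwich carry $Y$ up to the fixed point. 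I do not expect a real shortcut here.

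Granted (1), property (3) is cheap for matrices: Thompson convergence $\mathbb{A}^{(k)}\to\mathbb{A}$ forces uniform bounds $mI\leq A_{i}^{(k)}\leq MI$, and (1) combined with the scaling identity $\sigma_{g}(\omega;(cI,\dots,cI))=cI$ (itself from (2) and (4)) gives $mI\leq\sigma_{g}(\omega;\mathbb{A}^{(k)})\leq MI$; every subsequential limit solves the limiting GKE by continuity of $g$ on the relevant compact spectra, hence equals $\sigma_{g}(\omega;\mathbb{A})$ by uniqueness, so the whole sequence converges. For general $\mathcal{H}$ the same continuity is read off from the uniform-contraction structure of the construction behind the solvability theorem.

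For the ``Moreover'' clause, let $f(x)$ be the unique positive solution of the scalar equation $(1-\lambda)g(1/y)+\lambda g(x/y)=0$ for $x>0$ (existence and uniqueness from the solvability theorem on $\mathbb{C}$). Substituting $X=f(D)$, defined by functional calculus, into \eqref{GKE} with $\mathbb{A}=(I,D)$ and invoking uniqueness shows $\sigma_{g}((1-\lambda,\lambda);I,D)=f(D)$; then $f(1)=1$ by (4), and $f$ is operator monotone on $(0,\infty)$ by (1) applied to $I\leq I$, $D\leq E$. As $f\geq 0$ with $f(1)=1$, Kubo and Ando's theorem makes $\sigma(A,B):=A^{\frac{1}{2}}f(A^{\frac{-1}{2}}BA^{\frac{-1}{2}})A^{\frac{1}{2}}$ an operator mean, and congruence invariance (2) with $X=A^{\frac{1}{2}}$ identifies $\sigma_{g}((1-\lambda,\lambda);A,B)$ with $\sigma(A,B)$. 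Finally, implicitly differentiating $(1-\lambda)g(1/y)+\lambda g(x/y)=0$ at $x=y=1$ and using $g(1)=0$, $g'(1)=1$ gives $-(1-\lambda)f'(1)+\lambda(1-f'(1))=0$, i.e.\ $f'(1)=\lambda$, so $\sigma_{g}((1-\lambda,\lambda);\cdot,\cdot)$ is a $\lambda$-weighted operator mean.
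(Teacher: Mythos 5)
Your handling of (4), (2), and the ``Moreover'' clause is correct and complete: the unitary $Z=S^{\frac{1}{2}}XY^{\frac{-1}{2}}$ argument for congruence invariance, the identification of $\sigma_{g}((1-\lambda,\lambda);A,B)$ with the Kubo--Ando mean of the scalar solution $f_{\lambda}$, and the computation $f_{\lambda}'(1)=\lambda$ all go through, granted (1) and the solvability theorem. Note, however, that the paper itself offers no proof of this statement: it is Theorem~\ref{prop:GKE mean}, imported verbatim from \cite{P2016}, so the real question is whether your argument stands on its own rather than silently re-importing the cited result.

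It does not, and the gap is exactly at property (1), which is where all the content lies (and which the rest of the paper leans on). You reduce (1) to the subsolution principle ``$\sum_{i}w_{i}g(Y^{\frac{-1}{2}}B_{i}Y^{\frac{-1}{2}})\geq 0$ implies $Y\leq\sigma_{g}(\omega;\mathbb{B})$'', which, after a congruence by $Y^{\frac{-1}{2}}$, is precisely Theorem~\ref{thm1}(1) of this paper; but the paper proves Theorem~\ref{thm1} by a monotone iteration whose increasing step explicitly invokes property (1) of Theorem~\ref{prop:GKE mean}. So within this paper's logical structure your reduction is circular unless the subsolution principle is established independently, and your proposed route --- ``realise $\sigma_{g}(\omega;\mathbb{B})$ as the unique fixed point of an order-preserving, Thompson-nonexpansive self-map of $\mathcal{P}$'' --- is not supplied by the statement of the solvability theorem, which asserts only existence and uniqueness; the GKE itself is not visibly a fixed-point equation of an order-preserving map. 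Exhibiting such a realisation (solutions as limits of fixed points of maps built from the two-variable induced Kubo--Ando means, with the check that a subsolution is pushed upward by one iteration step) is exactly the content of P\'alfia's construction in \cite{P2016}, and it is the part you would actually have to write out. The same deferral occurs in (3): your compactness argument works for matrices only, and for general $\mathcal{H}$ you again appeal to an unstated ``uniform-contraction structure'' of the construction. As written, (1) and (3) are plans referencing the cited proof, not proofs.
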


More generalizations are discussed in 
\cite{HarXiv1711.10170, HSW2018, P2016}.

\subsection{The Ando-Hiai inequality}
The Ando-Hiai inequality is one of the most important 
inequalities in the operator theory.

\begin{thqt}[The Ando-Hiai inequality \cite{{AH1994}}]
\label{thqt:AH}
Let $A,B\in \mathcal{PS}$ and $\lambda\in [0,1]$. If 
$A\sharp_{\lambda}B \leq I$ holds, then 
$A^{r}\sharp_{\lambda}B^{r} \leq I$
holds for all $r\geq 1$.
\end{thqt}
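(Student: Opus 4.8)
The plan is to follow the classical route: reduce to invertible $A,B$, then bring the exponent down to $r=2$ by iteration together with an interpolation, and finally settle $r=2$ by combining the L\"owner--Heinz inequality with the Furuta inequality.

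First I would dispose of the endpoints $\lambda\in\{0,1\}$: if $\lambda=0$ then $A\sharp_0 B=A$, so $A\sharp_0 B\le I$ just says $\|A\|\le 1$, whence $\|A^r\|\le 1$ and $A^r\sharp_0 B^r=A^r\le I$; the case $\lambda=1$ is symmetric. For $0<\lambda<1$ I would note that it is enough to treat invertible $A,B$, the general $\mathcal{PS}$ case following by the usual approximation ($A+\varepsilon I$, $B+\varepsilon I$ with $\varepsilon\searrow 0$) using the homogeneity $c(A\sharp_\lambda B)=(cA)\sharp_\lambda(cB)$ (the equality case of condition~(2) for the invertible scalar $c^{1/2}I$) and the continuity from above in condition~(3) of the definition of an operator mean. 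Granting the case $r=2$, i.e.\ $A\sharp_\lambda B\le I\Rightarrow A^2\sharp_\lambda B^2\le I$, applying it repeatedly (to $A^2,B^2$, then $A^4,B^4$, and so on) gives $A^{2^k}\sharp_\lambda B^{2^k}\le I$ for every positive integer $k$. For arbitrary $r\ge 1$ choose $k$ with $\theta:=r/2^k\le 1$; since $A^r=(A^{2^k})^\theta$ and $B^r=(B^{2^k})^\theta$, the claim follows from the standard inequality $P^\theta\sharp_\lambda Q^\theta\le (P\sharp_\lambda Q)^\theta$ for $\theta\in[0,1]$ (a Jensen-type operator inequality, since $t\mapsto t^\theta$ is operator concave and vanishes at $0$, hence superadditive with respect to the operator mean $\sharp_\lambda$), applied with $P=A^{2^k}$, $Q=B^{2^k}$ and combined with $(P\sharp_\lambda Q)^\theta\le I$.

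The heart of the matter is the case $r=2$. Conjugating the hypothesis $A\sharp_\lambda B=A^{1/2}(A^{-1/2}BA^{-1/2})^\lambda A^{1/2}\le I$ by $A^{-1/2}$ turns it into $T^\lambda\le A^{-1}$, i.e.\ $A\le T^{-\lambda}$, where $T:=A^{-1/2}BA^{-1/2}$; similarly, conjugating $A^2\sharp_\lambda B^2=A(A^{-1}B^2A^{-1})^\lambda A$ by $A^{-1}$ shows that the desired conclusion is equivalent to $(A^{-1}B^2A^{-1})^\lambda\le A^{-2}$. Now $B=A^{1/2}TA^{1/2}$ gives $A^{-1}B^2A^{-1}=A^{-1/2}(TAT)A^{-1/2}$, and from $A\le T^{-\lambda}$ together with a congruence by $T$ one gets $TAT\le T\,T^{-\lambda}\,T=T^{2-\lambda}$; hence, by the L\"owner--Heinz inequality applied with exponent $\lambda\in[0,1]$, it suffices to prove $(A^{-1/2}T^{2-\lambda}A^{-1/2})^\lambda\le A^{-2}$. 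But this is precisely the Furuta inequality applied to $A^{-1}\ge T^\lambda\ge 0$: with $X=A^{-1}$, $Y=T^\lambda$ and parameters $r=1$, $p=(2-\lambda)/\lambda\ge 1$, $q=1/\lambda\ge 1$, one checks $(1+r)q=p+r=2/\lambda$, so that the Furuta inequality $(X^{r/2}Y^pX^{r/2})^{1/q}\le X^{(p+r)/q}$ reads exactly $(A^{-1/2}T^{2-\lambda}A^{-1/2})^\lambda\le A^{-2}$. This completes $r=2$, hence the theorem.

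The main obstacle is the $r=2$ step, and within it the genuinely deep ingredient is the Furuta inequality: the L\"owner--Heinz inequality by itself cannot get from $T^\lambda\le A^{-1}$ to the required bound on $(A^{-1/2}T^{2-\lambda}A^{-1/2})^\lambda$, because that would amount to raising an operator inequality to a power larger than $1$. Two subsidiary points deserve attention: verifying that the parameters $(p,q,r)$ lie in the admissible region of the Furuta inequality --- they sit exactly on its boundary $(1+r)q=p+r$, where it is still valid --- and, in the infinite-dimensional non-invertible setting, a careful limiting argument for the reduction to invertible operators (in finite dimensions this is immediate from the norm continuity of $\sharp_\lambda$).
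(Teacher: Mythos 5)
Your reduction of the case $r=2$ to the Furuta inequality is correct (the parameters $p=(2-\lambda)/\lambda$, $q=1/\lambda$, $r=1$ do lie on the admissible boundary $(1+r)q=p+r$), and this is a legitimate alternative to the original argument of Ando--Hiai, which the present paper does not reproduce but only cites (their proof goes through log-majorization via antisymmetric tensor powers). The problem is the endgame. The ``standard inequality'' $P^{\theta}\sharp_{\lambda}Q^{\theta}\le (P\sharp_{\lambda}Q)^{\theta}$ for $\theta\in[0,1]$, which you use to pass from $r=2^{k}$ down to arbitrary $r\ge 1$, is not a true operator inequality. In finite dimensions both sides have the same determinant, since $\det(X\sharp_{\lambda}Y)=\det(X)^{1-\lambda}\det(Y)^{\lambda}$ gives $\det\bigl(P^{\theta}\sharp_{\lambda}Q^{\theta}\bigr)=\det(P)^{\theta(1-\lambda)}\det(Q)^{\theta\lambda}=\det\bigl((P\sharp_{\lambda}Q)^{\theta}\bigr)$; an operator inequality between two positive definite matrices with equal determinants forces equality, and the identity $P^{\theta}\sharp_{\lambda}Q^{\theta}=(P\sharp_{\lambda}Q)^{\theta}$ fails for noncommuting $P,Q$ (iterating it with $\theta=\tfrac12$ would force $\log(P\sharp Q)=\tfrac12(\log P+\log Q)$ for all $P,Q$). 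Equivalently, after the substitution $P=X^{1/\theta}$, $Q=Y^{1/\theta}$ your lemma is exactly the converse implication ``$X^{r}\sharp_{\lambda}Y^{r}\le I\Rightarrow X\sharp_{\lambda}Y\le I$'' for $r\ge1$, which is false. The heuristic you offer (operator concavity of $t^{\theta}$, ``superadditivity with respect to $\sharp_{\lambda}$'') does not give a comparison of this kind: Jensen-type results (Hansen, Hansen--Pedersen) control $C^{*}f(A)C$ versus $f(C^{*}AC)$, not $f(A)\sigma f(B)$ versus $f(A\sigma B)$, and no universal direction for the latter holds even for scalars once one leaves the arithmetic mean. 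So your argument proves the theorem only for $r\in\{2^{k}\}$, not for all $r\ge1$.

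The standard repair is to prove the implication for every $r\in[1,2]$, not only $r=2$, and then iterate upward, writing $r=2^{k}s$ with $s\in[1,2]$: apply the $[1,2]$ case once and the doubling step $k$ times. The $[1,2]$ case does not follow from your $r=2$ computation (the identity $A^{-1}B^{2}A^{-1}=A^{-1/2}TATA^{-1/2}$ has no analogue for fractional powers of $B$); the classical Furuta-based route obtains it from the two one-sided Ando--Hiai inequalities, e.g. $A\sharp_{\lambda}B\le I\Rightarrow A^{r}\sharp_{\mu}B\le I$ with $\mu=\lambda/((1-\lambda)r+\lambda)$, each of which is an instance of the Furuta inequality, composed in the two variables. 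A secondary point: your reduction to invertible $A,B$ via $A+\varepsilon I$, $B+\varepsilon I$ is not as routine as stated in the $\mathcal{B(H)}$ setting, because the hypothesis $A\sharp_{\lambda}B\le I$ is not inherited by the perturbed operators, and the decreasing strong convergence $A_{\varepsilon}\sharp_{\lambda}B_{\varepsilon}\downarrow A\sharp_{\lambda}B$ does not by itself give the norm control $\|A_{\varepsilon}\sharp_{\lambda}B_{\varepsilon}\|\to\|A\sharp_{\lambda}B\|$ that your homogeneity argument needs; this is harmless for matrices but requires an actual argument for operators.
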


The Ando-Hiai inequality has been extended into 
the following two types.

\begin{thqt}[Extension of the Ando-Hiai inequality 1,
\cite{LL2014, LP2012, LY2013, Y2012}]
\label{thqt:exAH1}
Let $\mathbb{A}=(A_{1},...,A_{n})\in \mathcal{P}^{n}$, 
$\omega\in \Delta_{n}$ and $t\in (0,1]$.
Then the following hold.
\begin{itemize}
\item[(1)] 
$\Lambda(\omega; \mathbb{A})\leq I$ implies $\Lambda(\omega; \mathbb{A}^{r})\leq I$
for all $r\geq 1$,
\item[(2)] $P_{t}(\omega; \mathbb{A})\leq I$ 
implies $P_{\frac{t}{r}}(\omega; \mathbb{A}^{r})\leq I$
for all $r\geq 1$,
\item[(3)] $P_{-t}(\omega; \mathbb{A})\geq I$ 
implies $P_{-\frac{t}{r}}(\omega; \mathbb{A}^{r})\geq I$
for all $r\geq 1$,
\end{itemize}
where $\mathbb{A}^{r}=(A_{1}^{r},...,A_{n}^{r})$.
\end{thqt}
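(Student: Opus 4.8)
My plan is to isolate part (2) as the core and deduce parts (1) and (3) from it.

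\textit{Reducing (1) and (3) to (2).} For (3) I would first record the inversion identity $P_{-t}(\omega;\mathbb{A})=P_{t}(\omega;\mathbb{A}^{-1})^{-1}$, which follows by inverting every term of the equation $\sum_{i}w_{i}(Y^{-1/2}A_{i}^{-1}Y^{-1/2})^{t}=I$ defining $Y=P_{t}(\omega;\mathbb{A}^{-1})$ and using $(Y^{1/2}A_{i}Y^{1/2})^{-1}=Y^{-1/2}A_{i}^{-1}Y^{-1/2}$. Then $P_{-t}(\omega;\mathbb{A})\ge I$ is equivalent to $P_{t}(\omega;\mathbb{A}^{-1})\le I$, so applying (2) to $\mathbb{A}^{-1}$ yields $P_{t/r}(\omega;(\mathbb{A}^{r})^{-1})\le I$, i.e.\ $P_{-t/r}(\omega;\mathbb{A}^{r})\ge I$. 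For (1) I would use positive homogeneity $P_{s}(\omega;c\mathbb{A})=cP_{s}(\omega;\mathbb{A})$ (read off the defining equation) together with $P_{s}(\omega;\mathbb{A})\to\Lambda(\omega;\mathbb{A})$ as $s\searrow0$: given $\Lambda(\omega;\mathbb{A})\le I$ and $\varepsilon>0$, this limit forces $P_{s}(\omega;(1+\varepsilon)^{-1}\mathbb{A})\le I$ for all small $s\in(0,1]$, whence (2) and homogeneity give $P_{s/r}(\omega;\mathbb{A}^{r})\le(1+\varepsilon)^{r}I$; letting $s\searrow0$ and then $\varepsilon\searrow0$ gives $\Lambda(\omega;\mathbb{A}^{r})\le I$.

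\textit{Proving (2).} Here I would rely on the fixed-point description of the power mean for $t\in(0,1]$: $X=P_{t}(\omega;\mathbb{A})$ is the unique solution in $\mathcal{P}$ of $X=\sum_{i}w_{i}(X\sharp_{t}A_{i})$, and it is the limit of the monotone iteration $X_{k+1}=\sum_{i}w_{i}(X_{k}\sharp_{t}A_{i})$ from any starting point. The other ingredient is the two-variable weighted Ando--Hiai inequality, $P_{t}(\lambda;C,D)\le I\Rightarrow P_{t/r}(\lambda;C^{r},D^{r})\le I$ for $\lambda\in[0,1]$, $t\in(0,1]$, $r\ge1$, which is the $n=2$ case and is available from the cited works and, at bottom, from Theorem~\ref{thqt:AH}. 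Assuming $X=P_{t}(\omega;\mathbb{A})\le I$, so that $X=\sum_{i}w_{i}(X\sharp_{t}A_{i})\le I$, I would run the iteration $Y_{k+1}=\sum_{i}w_{i}(Y_{k}\sharp_{t/r}A_{i}^{r})$ whose limit is $P_{t/r}(\omega;\mathbb{A}^{r})$, starting inside the order interval $\{\,Z:0<Z\le I\,\}$, and show by induction --- using the two-variable inequality applied term by term, monotonicity of $\sharp_{t/r}$, and monotonicity of the convex combination --- that the whole orbit stays in $\{\,Z:0<Z\le I\,\}$; passing to the limit then gives $P_{t/r}(\omega;\mathbb{A}^{r})\le I$.

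\textit{The main obstacle.} The hard part is exactly this invariance: showing that the iteration attached to $(t/r,\mathbb{A}^{r})$ preserves $\{\,Z:0<Z\le I\,\}$ once $P_{t}(\omega;\mathbb{A})\le I$ is assumed. A crude term-by-term comparison of the two iterations does not close the argument, because they pair different two-variable operations (parameters $t$ versus $t/r$, arguments $A_{i}$ versus $A_{i}^{r}$) against the same weights; one must instead compare against the fixed point $X=P_{t}(\omega;\mathbb{A})$ itself, exploiting the full equation $X=\sum_{i}w_{i}(X\sharp_{t}A_{i})\le I$ rather than any single iterate, and it is precisely at this step that the two-variable Ando--Hiai inequality does the essential work. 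Once the order interval is seen to be invariant, monotone convergence delivers the conclusion and the remainder is routine bookkeeping.
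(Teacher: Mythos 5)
Your skeleton for (2) --- pass to the fixed point $X=P_{t}(\omega;\mathbb{A})=\sum_{i}w_{i}(X\sharp_{t}A_{i})\le I$ and then run a monotone iteration for the new mean --- is essentially the right one (it is the route the paper takes in general form: Theorem~\ref{thm1} is exactly the monotone-iteration step, and Theorem~\ref{thm2}, \ref{thm2}$'$ with $g(x)=\log x$, $\frac{x^{t}-1}{t}$, $\frac{x^{-t}-1}{-t}$ give (1), (2), (3)). But the step you yourself single out as ``the main obstacle'' is never closed, and the tool you name does not close it. What is needed at the fixed point is $\sum_{i}w_{i}\bigl(X\sharp_{t/r}A_{i}^{r}\bigr)\le\sum_{i}w_{i}\bigl(X\sharp_{t}A_{i}\bigr)=X$, for which the correct per-term estimate is $X\sharp_{t/r}A_{i}^{r}\le X\sharp_{t}A_{i}$ whenever $0<X\le I$, equivalently $\bigl(X^{-1/2}A_{i}^{r}X^{-1/2}\bigr)^{1/r}\le X^{-1/2}A_{i}X^{-1/2}$. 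This is a Hansen-type lemma (apply Hansen's inequality \cite{H1979} with the contraction $X^{1/2}$ to $X^{-1/2}A_{i}^{r}X^{-1/2}$ and $f(x)=x^{1/r}$, then conjugate by $X^{-1/2}$ and use operator monotonicity of $x^{t}$); it is not the two-variable power-mean Ando--Hiai implication $P_{t}(\lambda;C,D)\le I\Rightarrow P_{t/r}(\lambda;C^{r},D^{r})\le I$ that you invoke, and you give no way to extract it from that implication --- term by term you have no bound $X\sharp_{t}A_{i}\le I$, only a bound on the weighted sum. Without stating and proving this lemma the proof of (2) is an outline, not a proof; with it, the rest of your argument (monotonicity of $Z\mapsto\sum_{i}w_{i}(Z\sharp_{t/r}A_{i}^{r})$, decreasing orbit from $X$, convergence to the unique fixed point) does go through, exactly as in \cite{LP2012,LY2013} and in the paper's proof of Theorem~\ref{thm2} via Theorem~\ref{thm1}.

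A second, smaller gap is the reduction of (1) to (2). The inversion identity $P_{-t}(\omega;\mathbb{A})=P_{t}(\omega;\mathbb{A}^{-1})^{-1}$ and hence the reduction of (3) to (2) are fine. For (1), however, your step ``the limit $P_{s}\to\Lambda$ forces $P_{s}(\omega;(1+\varepsilon)^{-1}\mathbb{A})\le I$ for all small $s$'' requires $P_{s}(\omega;\mathbb{A})\le(1+\varepsilon)I$ for small $s$, i.e.\ convergence in norm (or in the Thompson metric). In the $\mathcal{B(H)}$ setting of this paper the cited fact \cite{LL2014} is that $P_{s}$ decreases to $\Lambda$ in the strong operator topology, and a decreasing net converging strongly to an operator $\le I$ need not eventually satisfy a norm bound $\le 1+\varepsilon$ (consider $I+\delta Q_{n}$ with $Q_{n}$ the tail projections on $\ell^{2}$). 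So as written this limiting argument is secure only in finite dimensions; the paper sidesteps it by obtaining (1) directly from Theorem~\ref{thm2} with $g(x)=\log x$, with no limit over $s$ at all.
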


We remark that opposite inequalities 
of Theorems \ref{thqt:AH} and \ref{thqt:exAH1} (1)
hold because
$\Lambda(\omega; \mathbb{A})^{-1}=
\Lambda(\omega; \mathbb{A}^{-1})$
holds for all $\mathbb{A}\in \mathcal{P}^{n}$ 
and $\omega \in \Delta_{n}$,
where $\mathbb{A}^{-1}=(A_{1}^{-1},...,A_{n}^{-1})$.
Moreover, the Karcher mean is
a unique geometric mean satisfying 
all 10 properties stated in \cite{ALM2004}
and the property of Theorem \ref{thqt:exAH1} (1)
\cite{Y2012}.

We notice for Theorem \ref{thqt:exAH1}
(2) and (3) that different 
power means are appeared in each statement, 
more precisely, there are power means with 
different parameters. 
On the other hand
the Ando-Hiai inequality has been extended 
further to 
the following form.

\begin{thqt}[Extension of the Ando-Hiai inequality 2,
\cite{W2014}]\label{Ando-Hiai Wada}
Let $\sigma$ be an operator mean with a 
representing function $f$. 
Then the following are equivalent.
\begin{itemize}
\item[(1)] $f(x^{r})\leq f(x)^{r}$ holds for all 
$x\in (0,\infty)$ and $r\geq 1$,
\item[(2)] $\sigma(A,B)\leq I$ implies 
$\sigma(A^{r},B^{r})\leq I$ for all $A,B\in \mathcal{PS}$
and $r\geq 1$.
\end{itemize}
\end{thqt}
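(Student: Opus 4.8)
The plan is to prove the two implications separately; the painless direction is $(2)\Rightarrow(1)$, obtained by feeding scalar operators into $(2)$. Fix $x\in(0,\infty)$ and $r\ge1$. Since $f$ is a non-negative operator monotone function on $(0,\infty)$ with $f(1)=1$, it is real-analytic there and not identically zero, so $f>0$ on $(0,\infty)$; set $A=f(x)^{-1}I$ and $B=f(x)^{-1}xI$, both in $\mathcal{P}$. Then $A^{-1/2}BA^{-1/2}=xI$, so $\sigma(A,B)=f(x)^{-1}f(xI)=I\le I$, and $(2)$ gives $\sigma(A^{r},B^{r})\le I$. Since $A^{r}=f(x)^{-r}I$ and $B^{r}=f(x)^{-r}x^{r}I$, we have $A^{-r/2}B^{r}A^{-r/2}=x^{r}I$ and hence $\sigma(A^{r},B^{r})=f(x)^{-r}f(x^{r})I\le I$, i.e.\ $f(x^{r})\le f(x)^{r}$; as $x,r$ were arbitrary, $(1)$ holds.

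For $(1)\Rightarrow(2)$ I would first reduce, by a routine approximation using the continuity and homogeneity of $\sigma$, to the case $A,B\in\mathcal{P}$. For invertible $A,B$, applying the equivariance $X^{*}\sigma(A,B)X=\sigma(X^{*}AX,X^{*}BX)$ — an equality for invertible $X$ — with $X=A^{-1/2}$ and writing $C=A^{-1/2}BA^{-1/2}$, the hypothesis $\sigma(A,B)\le I$ becomes $f(C)\le A^{-1}$, while the conclusion $\sigma(A^{r},B^{r})\le I$ becomes
\[
f\!\left(A^{-r/2}B^{r}A^{-r/2}\right)\le A^{-r}.
\]
Two facts are then available: the scalar hypothesis $(1)$ lifts at once to operators, $f(T^{r})\le f(T)^{r}$ for all $T\in\mathcal{P}$ and $r\ge1$ (apply the scalar inequality on the spectrum of $T$), and $f$ is operator monotone. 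One may also note that the set of exponents $r\ge1$ for which the implication in $(2)$ holds is closed under multiplication — apply $(2)$ to $(A^{r},B^{r})$ — so it suffices to handle $r$ in a bounded interval.

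The crux, and the step I expect to be the main obstacle, is to pass from $f(A^{-1/2}BA^{-1/2})\le A^{-1}$ to $f(A^{-r/2}B^{r}A^{-r/2})\le A^{-r}$. The naive route collapses immediately: $f^{-1}$ need not be operator monotone, and an operator inequality cannot be raised to a power $\ge1$, so $f(C)\le A^{-1}$ does not give $f(C)^{r}\le A^{-r}$. What is actually needed is a Furuta-type non-commutative operator inequality relating $A^{-r/2}B^{r}A^{-r/2}$ (which, for $r=2$, equals $A^{-1/2}CACA^{-1/2}$) to powers of $A^{-1/2}BA^{-1/2}$, to be combined with the operator-monotonicity of $f$ and the lifted inequality $f(T^{r})\le f(T)^{r}$ — precisely the ingredient that makes the classical Ando--Hiai inequality, Theorem~\ref{thqt:AH}, non-trivial, now required uniformly over all $f$ satisfying $(1)$. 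The function-theoretic role of $(1)$ is exactly to supply the sub-power behaviour of $f$ that such an argument consumes, and which a generic operator mean — e.g.\ the arithmetic mean, where $f(x^{r})\ge f(x)^{r}$ for $r>1$ — does not enjoy.
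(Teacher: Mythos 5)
Your argument for $(2)\Rightarrow(1)$ is correct and complete: testing $(2)$ on the scalar pair $A=f(x)^{-1}I$, $B=f(x)^{-1}xI$ (and noting $f>0$ on $(0,\infty)$) does give $f(x^{r})\leq f(x)^{r}$. The problem is the other direction. For $(1)\Rightarrow(2)$ you set up the reduction to invertible $A,B$, lift the scalar inequality to $f(T^{r})\leq f(T)^{r}$, and observe multiplicativity in $r$, but then you stop at what you yourself call the main obstacle: passing from $f(A^{-1/2}BA^{-1/2})\leq A^{-1}$ to $f(A^{-r/2}B^{r}A^{-r/2})\leq A^{-r}$. Saying that ``a Furuta-type non-commutative inequality is needed'' is a description of the difficulty, not a proof; as it stands the substantive implication of the theorem is unproved, so the proposal has a genuine gap. (A smaller point: the reduction from $\mathcal{PS}$ to $\mathcal{P}$ is not quite ``routine'', since $\sigma(A_{\varepsilon},B_{\varepsilon})\downarrow\sigma(A,B)$ only in the strong topology and norms need not pass to such limits, so some care is required there too.)

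For orientation: this theorem is quoted here from \cite{W2014} and not reproved in the paper, but the way the analogous gap is actually bridged can be seen in the paper's own $n$-variable versions (Theorem \ref{thm: extensionAH-wada} and Theorem \ref{thm: extensionAH-wada}$'$, and Theorem \ref{thm2}). There one does not try to compare $A^{-r/2}B^{r}A^{-r/2}$ with powers of $A^{-1/2}BA^{-1/2}$ at all. Instead one evaluates the defining (fixed-point) relation at $X=\sigma(A,B)$, converts the functional hypothesis on $f$ into the inequality $pg(x^{1/p})\lessgtr g(x)$ for the generating function, applies the Hansen or Hansen--Pedersen inequality \cite{H1979, HP1982} to the contraction built from $X$ for exponents $p\in[1,2]$, concludes via the comparison principle Theorem \ref{thm1}, and finally iterates $p\mapsto pp'$ to reach all $p\geq 1$. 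Wada's two-variable proof in \cite{W2014} likewise runs through explicit manipulations of the representing function rather than a general Furuta-type operator inequality. Supplying a mechanism of this kind is exactly what is missing from your write-up; without it, only the easy half of the equivalence has been established.
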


\section{Relations between generalized Karcher
 equation, relative operator entropy and 
operator means}

In this section, we shall give a relation between
the GKE and operator means.
First, we shall give a concrete form of 
an inverse function of the representing function 
of an operator mean which is derived from 
a given GKE.
Before showing results, we notice the following.
The representing function of an operator mean 
is defined only for two-variable operator means.
In this paper, we usually treat 
operator means of $n$ operators, and as 
a special case, we can treat operator means 
of two operators. Here we shall use the representing 
function of an operator mean as follows. 
Let $\sigma_{g}$ 
be a solution of \eqref{GKE}. Then 
for $\lambda\in [0,1]$, its representing
function $f_{\lambda}$ is defined by
$$ f_{\lambda}(x)=
\sigma_{g}((1-\lambda,\lambda); 1,x), $$
i.e., $f_{\lambda}(x)$ satisfies the following GKE:
\begin{equation}
 (1-\lambda)g\left(\frac{1}{f_{\lambda}(x)}\right)+
\lambda g\left(\frac{x}{f_{\lambda}(x)}\right)=0
\label{eq: GKE for representing function}
\end{equation}
for all $x>0$.
We note that $f_{1}(x)=x$ and $f_{0}(x)=1$ by 
\eqref{eq: GKE for representing function}.
Hence we can define $f_{\lambda}$ for all $\lambda\in 
[0,1]$.
Since $\sigma_{g}(1-\lambda, \lambda; A,B)$ is
an operator mean in Kubo and Ando's sense
by Theorem \ref{prop:GKE mean},
the representing function $f_{\lambda}$ is
an operator monotone function on $(0,\infty)$
and $f(1)=1$.

\begin{prop}[see also \cite{ParXiv1208.5603}]
\label{inverse function}
Let $g\in \mathcal{L}$. Then for each 
$\lambda\in (0,1)$,
the inverse of $f_{\lambda}$ in 
\eqref{eq: GKE for representing function} 
is given by
$$ f^{-1}_{\lambda}(x)=xg^{-1}\left(-\frac{1-\lambda}{\lambda}g\left(\frac{1}{x}\right)\right).$$
\end{prop}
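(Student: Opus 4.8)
The plan is to invert the defining relation \eqref{eq: GKE for representing function} algebraically; the only delicate points are to make sure that $g^{-1}$ is available and that the composition appearing in the claimed formula is meaningful.

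First I would record that every $g\in\mathcal{L}$ is \emph{strictly} increasing on $(0,\infty)$: an operator monotone function on an interval is either constant or strictly increasing, and $g$ is non-constant since $g'(1)=1\neq 0$. Hence $g$ is a (continuous) bijection from $(0,\infty)$ onto an interval $J:=g\bigl((0,\infty)\bigr)$ with $0=g(1)$ in its interior, and $g^{-1}\colon J\to(0,\infty)$ is a well-defined strictly increasing function. I would also check that $f_{\lambda}$ is injective, so that ``$f_{\lambda}^{-1}$'' is meaningful: if $f_{\lambda}(x_{1})=f_{\lambda}(x_{2})=y$, then \eqref{eq: GKE for representing function} (applied twice) gives $g(x_{1}/y)=-\tfrac{1-\lambda}{\lambda}g(1/y)=g(x_{2}/y)$, whence $x_{1}=x_{2}$ because $g$ is injective. (Alternatively, $f_{\lambda}$ is non-constant, hence strictly increasing, by Theorem \ref{prop:GKE mean}.)

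The core is then a short computation. Fix $z>0$ and set $y:=f_{\lambda}(z)>0$. Equation \eqref{eq: GKE for representing function} with $x=z$ reads
$$ (1-\lambda)\,g\!\left(\frac{1}{y}\right)+\lambda\,g\!\left(\frac{z}{y}\right)=0, $$
so that
$$ g\!\left(\frac{z}{y}\right)=-\frac{1-\lambda}{\lambda}\,g\!\left(\frac{1}{y}\right). $$
In particular the right-hand side lies in $J=\mathrm{ran}(g)$, so $g^{-1}$ may legitimately be applied to it, and this yields $g^{-1}\!\bigl(-\tfrac{1-\lambda}{\lambda}g(1/y)\bigr)=z/y$. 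Multiplying by $y$ gives
$$ y\, g^{-1}\!\left(-\frac{1-\lambda}{\lambda}\,g\!\left(\frac{1}{y}\right)\right)=z, $$
i.e. the function $\phi(x):=x\,g^{-1}\!\bigl(-\tfrac{1-\lambda}{\lambda}g(1/x)\bigr)$ satisfies $\phi\bigl(f_{\lambda}(z)\bigr)=z$ for every $z>0$. Since $f_{\lambda}$ is injective, this identifies $\phi$, restricted to the range of $f_{\lambda}$, with $f_{\lambda}^{-1}$ (and $f_{\lambda}\bigl(\phi(x)\bigr)=x$ there as well), which is exactly the asserted formula.

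I do not expect a genuine obstacle here. The only things requiring care are the well-definedness issues: that $g^{-1}$ exists (strict monotonicity of the operator monotone function $g$), and that $-\tfrac{1-\lambda}{\lambda}g(1/y)$ really belongs to the domain $J$ of $g^{-1}$ — which is why it is convenient to evaluate the claimed formula at a point $y=f_{\lambda}(z)$ rather than at an arbitrary $y\in(0,\infty)$. One should also note that $\lambda\in(0,1)$ is used to form the coefficient $\tfrac{1-\lambda}{\lambda}$, and that the natural domain of the right-hand side is understood to be the range of $f_{\lambda}$.
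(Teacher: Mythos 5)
Your proposal is correct and follows essentially the same route as the paper: rearrange the defining relation \eqref{eq: GKE for representing function} to $g(x/y)=-\tfrac{1-\lambda}{\lambda}g(1/y)$ with $y=f_{\lambda}(x)$, apply $g^{-1}$, and solve for $x$. The extra well-definedness checks you include (strict monotonicity of $g$, injectivity of $f_{\lambda}$, and that the argument lies in the range of $g$) are left implicit in the paper but are consistent with its argument.
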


\begin{proof}
Let $\sigma_{g}$ 
be an operator mean derived from the GKE.
Then for each $\lambda\in (0,1)$, 
$y=f_{\lambda}(x)$
satisfies the following equation
\begin{equation*}
(1-\lambda)g\left(\frac{1}{y}\right)
+\lambda g\left(\frac{x}{y}\right)=0. 
\end{equation*}
It is equivalent to 
$$ g\left(\frac{x}{y}\right)=-\frac{1-\lambda}{\lambda}
g\left(\frac{1}{y}\right), $$
and thus
$$ f^{-1}_{\lambda}(y)=x=
yg^{-1}\left(-\frac{1-\lambda}{\lambda}g\left(
\frac{1}{y}\right)\right).$$
The proof is completed.
\end{proof}

\noindent
{\bf Remark.}
If $f_{\lambda}$ is non-constant, then
by \eqref{eq: GKE for representing function},
we can get 
the range of $f_{\lambda}$ 
under a condition  
$\displaystyle g(0):=\lim_{x\searrow 0}g(x)=-\infty$ or 
$\displaystyle g(\infty):=\lim_{x\nearrow +\infty} =+\infty$
as follows:
Since $f_{\lambda}$ is operator monotone on 
$(0,\infty)$, the range of $f_{\lambda}$ is 
$\displaystyle (y_{0},y_{\infty}):=
(\lim_{x\searrow 0}f_{\lambda}(x),
\lim_{x\nearrow +\infty}f_{\lambda}(x))$. 
Hence it is enough to determine $y_{0}$
and $y_{\infty}$.
To obtain them, we shall divide the discussion into 
four cases:
(1) $g(0)=-\infty$ and $g(\infty)=+\infty$,
(2) $g(0)=-\infty$ and $g(\infty)<+\infty$,
(3) $g(0)>-\infty$ and $g(\infty)=+\infty$, and
(4) $g(0)>-\infty$ and $g(\infty)<+\infty$.
In fact, we can give examples of 
operator monotone functions $g_{i}$ $(i=1,2,3,4)$
which satisfy $g_{i}(1)=0$, $g_{i}'(1)=1$ and 
the above condition ($i$) $(i=1,2,3,4)$, 
respectively as follows:
$g_{1}(x)=\log x$,
$g_{2}(x)=1-\frac{1}{x}$, 
$g_{3}(x)=2(\sqrt{x}-1)$ and
$g_{4}(x)=2(1-\frac{2}{x+1})$. 
Here we shall give $y_{0}$ and 
$y_{\infty}$ in each case.

First of all, we shall discuss the two cases.
\begin{itemize}
\item[(a)] If $g(0)=-\infty$, then $y_{0}=0$.
In fact, assume $y_{0}>0$. Let $x\searrow 0$ in 
\eqref{eq: GKE for representing function}. Then
$y:=f_{\lambda}(x) \searrow y_{0}>0$ and 
$$ 0=(1-\lambda)g\left(\frac{1}{y_{0}}\right)+\lambda
g(0)=-\infty. $$
It is a contradiction. Hence $y_{0}=0$.
\item[(b)] If $g(\infty)=+\infty$, then $y_{\infty}=
+\infty$. In fact, assume $y_{\infty}<+\infty$. 
Let $x\nearrow +\infty$ in 
\eqref{eq: GKE for representing function}. 
Then $y\nearrow y_{\infty}<+\infty$ and
$$ 0=(1-\lambda)g\left(\frac{1}{y_{\infty}}\right)+
\lambda g(\infty)=+\infty. $$
It is a contradiction. Hence $y_{\infty}=+\infty$
\end{itemize}

Next, we shall give $y_{0}$ and $y_{\infty}$ in
cases (1) -- (4).
\begin{itemize}
\item[(1)] $y_{0}=0$ and $y_{\infty}=+\infty$ by 
 (a) and (b).
\item[(2)] $y_{0}=0$ by (a). We shall show 
$y_{\infty}=1/g^{-1}(-\frac{\lambda}{1-\lambda}g(\infty))
<+\infty$. 
Assume $y_{\infty}=+\infty$. 
Let $x\nearrow +\infty$ in
\eqref{eq: GKE for representing function}.
Then $y\nearrow y_{\infty}=+\infty$ and 
\begin{align*}
0  = (1-\lambda)g\left(\frac{1}{y}\right)+
\lambda g\left(\frac{x}{y}\right) 
& \leq 
(1-\lambda)g\left(\frac{1}{y}\right)+
\lambda g\left(\infty \right) \\
& \to 
(1-\lambda)g\left(0\right)+
\lambda g\left(\infty \right) =-\infty.
\end{align*}
It is a contradiction. Hence $y_{\infty}<+\infty$.
In this case, we can get 
$y_{\infty}$ as follows. 
By \eqref{eq: GKE for representing function},
we have
$ y=1/g^{-1}(-\frac{\lambda}{1-\lambda}g(\frac{x}{y}))$, and
$$ y_{\infty}=\lim_{x\nearrow \infty}
\frac{1}{g^{-1}(-\frac{\lambda}{1-\lambda}g(\frac{x}{y}))}=
\frac{1}{g^{-1}(-\frac{\lambda}{1-\lambda}g(\infty))}. $$

\item[(3)] $y_{\infty}=+\infty$ by (b). We shall show 
$y_{0}=1/g^{-1}(-\frac{\lambda}{1-\lambda}g(0))
>0$. 
Assume $y_{0}=0$. Let $x\searrow 0$ in
\eqref{eq: GKE for representing function}.
Then $y\searrow y_{0}=0$ and 
\begin{align*}
0  = (1-\lambda)g\left(\frac{1}{y}\right)+
\lambda g\left(\frac{x}{y}\right) 
& \geq 
(1-\lambda)g\left(\frac{1}{y}\right)+
\lambda g\left(0 \right) \\
& \to 
(1-\lambda)g\left(\infty \right)+
\lambda g\left(0 \right) =+\infty.
\end{align*}
It is a contradiction. Hence $y_{0}>0$.
In this case, we can get 
$y_{0}$ as follows. 
By \eqref{eq: GKE for representing function},
we have
$ y=1/g^{-1}(-\frac{\lambda}{1-\lambda}g(\frac{x}{y}))$, and
$$ y_{0}=\lim_{x\searrow 0}
\frac{1}{g^{-1}(-\frac{\lambda}{1-\lambda}g(\frac{x}{y}))}=
\frac{1}{g^{-1}(-\frac{\lambda}{1-\lambda}g(0))}. $$

\item[(4)] For this case, we cannot settle 
$y_{0}$ and $y_{\infty}$, generally. 
For example, if $y_{0}=0$, then
\eqref{eq: GKE for representing function} 
and monotonicity of $g$ imply
\begin{align*}
0 = (1-\lambda)g\left(\frac{1}{y}\right)+
\lambda g\left(\frac{x}{y}\right)
& \geq 
(1-\lambda)g\left(\frac{1}{y}\right)+
\lambda g\left(0\right)\\
& \to 
(1-\lambda)g\left(\infty \right)+
\lambda g\left(0\right) \quad 
\text{(as $x\searrow 0$)}.
\end{align*}
Hence  
if $(1-\lambda)g\left(\infty \right)+
\lambda g\left(0\right)>0$, then 
$y_{0}=1/g^{-1}(-\frac{\lambda}{1-\lambda}g(0))>0$
by the contraposition.
If $(1-\lambda)g\left(\infty \right)+
\lambda g\left(0\right)\leq 0$, then we cannot 
settle $y_{0}$.

If $y_{\infty}=+\infty$, then
\eqref{eq: GKE for representing function} implies
\begin{align*}
0 = (1-\lambda)g\left(\frac{1}{y}\right)+
\lambda g\left(\frac{x}{y}\right)
& \leq 
(1-\lambda)g\left(\frac{1}{y}\right)+
\lambda g\left(\infty \right)\\
& \to 
(1-\lambda)g\left(0 \right)+
\lambda g\left(\infty \right)\quad
\text{(as $x\nearrow +\infty$)}.
\end{align*}
Hence if $(1-\lambda)g\left(0 \right)+
\lambda g\left(\infty \right)<0$, then 
$y_{\infty}=
1/g^{-1}(-\frac{\lambda}{1-\lambda}g(\infty))<+\infty$
by the contraposition.
If $(1-\lambda)g\left(0 \right)+
\lambda g\left(\infty \right)\geq 0$, then we cannot 
settle $y_{\infty}$.

More precisely, 
the range of $f_{\lambda}$ 
depends on $\lambda$ in this case. 
For example, let $g_{4}(x)=2(1-\frac{2}{x+1})$ and 
$\lambda=\frac{1}{2}$. 
Then by \eqref{eq: GKE for representing function},
we have $y=f_{\frac{1}{2}}(x)=\sqrt{x}$, 
and hence $y_{0}=0$ and 
$y_{\infty}=\infty$. On the other hand, let 
$\lambda=\frac{1}{4}$. Then we have
$y=f_{\frac{1}{4}}(x)=
\frac{1}{4}\{(1-x)+\sqrt{(1-x)^{2}+16x}\}$, and
$y_{0}=\frac{1}{2}>0$ and $y_{\infty}=2<+\infty$.
\end{itemize}

\begin{prop}\label{return to g}
Let $g\in \mathcal{L}$
and $f_{\lambda}$ be the representing function 
of $\sigma_{g}$. Then
$$ 
\left. \frac{\partial}{\partial \lambda}f_{\lambda}(x)
 \right|_{\lambda=0}
=g(x).$$
\end{prop}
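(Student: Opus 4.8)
The plan is to differentiate the defining relation \eqref{eq: GKE for representing function} implicitly with respect to $\lambda$ and then evaluate at $\lambda=0$, using the three pieces of data $f_{0}(x)=1$, $g(1)=0$ and $g'(1)=1$.

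First I would fix $x>0$, write $y=y(\lambda)=f_{\lambda}(x)$, and introduce
$F(\lambda,y)=(1-\lambda)g\!\left(\tfrac1y\right)+\lambda g\!\left(\tfrac xy\right)$,
so that $F(\lambda,f_{\lambda}(x))\equiv 0$ and $F(0,1)=g(1)=0$. Since every operator monotone function on $(0,\infty)$ is $C^{\infty}$ (indeed real-analytic) on $(0,\infty)$, $F$ is smooth in $(\lambda,y)$ near $(0,1)$, and $\partial_{y}F(0,1)=g'(1)\cdot(-1)=-1\neq 0$. Hence the implicit function theorem produces a $C^{1}$ branch $\lambda\mapsto y(\lambda)$ defined for $\lambda$ near $0$ with $y(0)=1$ and $F(\lambda,y(\lambda))=0$; by Theorem~B (uniqueness of the positive solution of a GKE) this branch coincides with $\lambda\mapsto f_{\lambda}(x)$, so in particular $f_{\lambda}(x)$ is differentiable in $\lambda$ at $0$.

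It then remains to read off the derivative: $\partial_{\lambda}F(0,1)=-g(1)+g(x)=g(x)$, whence
$\left.\tfrac{\partial}{\partial\lambda}f_{\lambda}(x)\right|_{\lambda=0}
=-\,\partial_{\lambda}F(0,1)\big/\partial_{y}F(0,1)=-g(x)/(-1)=g(x)$,
which is the claimed identity. The only point requiring any care is legitimizing differentiability of $f_{\lambda}(x)$ at the endpoint $\lambda=0$; once the implicit function theorem is invoked at $(\lambda,y)=(0,1)$ with $g'(1)\neq 0$ and the resulting branch is identified with $f_{\lambda}(x)$ by uniqueness of the GKE solution, everything else is a one-line computation, so I do not anticipate a genuine obstacle.
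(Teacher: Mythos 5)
Your proof is correct, and it handles the one genuinely delicate point (differentiability of $\lambda\mapsto f_{\lambda}(x)$ at the endpoint $\lambda=0$) by a route different from the paper's. Both arguments rest on implicitly differentiating the same scalar relation \eqref{eq: GKE for representing function}, and the final evaluation $-\partial_{\lambda}F(0,1)/\partial_{y}F(0,1)=g(x)$ is the same computation; the difference is how the endpoint is legitimized. The paper differentiates the relation for $\lambda\in(0,1)$ and then lets $\lambda\to 0$, which requires knowing $\lim_{\lambda\to 0}f_{\lambda}(x)=1$; this is supplied by Lemma \ref{lem:limit}, whose proof uses Proposition \ref{inverse function}, the symmetry $f_{1-\lambda}(x)=xf_{\lambda}(1/x)$, and an equicontinuity result quoted from \cite[Lemma 3.6]{HarXiv1711.10170}. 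You instead apply the implicit function theorem to $F(\lambda,y)=(1-\lambda)g(1/y)+\lambda g(x/y)$ at $(0,1)$, where $\partial_{y}F(0,1)=-g'(1)=-1\neq0$, and identify the resulting $C^{1}$ branch with $f_{\lambda}(x)$ by uniqueness of the positive solution of the GKE (applied to scalar multiples of the identity; alternatively, uniqueness of the positive scalar root follows directly from strict monotonicity of $y\mapsto F(\lambda,y)$, since $g'>0$). This buys you a self-contained proof that avoids Lemma \ref{lem:limit} and the external equicontinuity citation, and it gives genuine differentiability at $\lambda=0$ rather than only a limit of derivatives on $(0,1)$ — indeed, passing from that limit to the derivative at $0$ is a step the paper leaves implicit. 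What the paper's route buys in exchange is the lemma itself, with its symmetry identity, recorded for its own sake. The only small point worth making explicit in your write-up is the scalar-versus-operator uniqueness remark above; with that noted, the argument is complete.
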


For the Karcher mean case, 
$f_{\lambda}(x)=x^{\lambda}$ and 
$g(x)=\frac{\partial}{\partial \lambda}x^{\lambda}
|_{\lambda=0}=\log x$. We note that $g(x)=\log x$ is a representing
function of the relative operator entropy 
\cite{F1992}. In fact 
let $A,B\in \mathcal{P}$. Then the relative 
operator entropy $S(A|B)$ is defined by
$$ S(A|B)=A^{\frac{1}{2}}\log (
A^{\frac{-1}{2}}BA^{\frac{-1}{2}})A^{\frac{1}{2}}. $$
For the power mean case, 
$f_{\lambda}(x)=
[1-\lambda+\lambda x^{t}]^{\frac{1}{t}}$ and 
$g(x)=
\frac{\partial}{\partial \lambda}
[1-\lambda+\lambda x^{t}]^{\frac{1}{t}}
|_{\lambda=0}
=\frac{x^{t}-1}{t}$. We note that 
 $g(x)=\frac{x^{t}-1}{t}$ is a representing
function of the Tsallis relative operator entropy
\cite{YKF2005}. 
In fact 
let $A,B\in \mathcal{P}$. Then the Tsallis 
relative operator entropy $T_{t}(A|B)$ is defined by
$$ T_{t}(A|B)=A^{\frac{1}{2}}\frac{
(A^{\frac{-1}{2}}BA^{\frac{-1}{2}})^{t}-I}{t}
A^{\frac{1}{2}}
=
\frac{A\sharp_{t}B-A}{t}. $$
So relative operator entropy is closely related to
the GKE and operator means.

To prove Proposition \ref{return to g},
we shall prepare the following lemma.

\begin{lem}\label{lem:limit}
Let $g\in \mathcal{L}$
and $f_{\lambda}$ be the representing function 
of $\sigma_{g}$. Then
$f_{1-\lambda}(x)=xf_{\lambda}(1/x)$ holds
for all $\lambda\in [0,1]$ and $x\in (0,\infty)$.
Moreover, $\lim_{\lambda\to 0}f_{\lambda}(x)
=1$ holds for all $x\in (0,\infty)$.
\end{lem}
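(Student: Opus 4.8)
The plan is to prove the two assertions separately, starting from one observation about $g$ used in both parts: since $g\in\mathcal{L}$ has $g'(1)=1\neq 0$, the operator monotone function $g$ is non-constant, hence strictly increasing and so injective on $(0,\infty)$, and (being operator monotone) it is also continuous there. I will also use repeatedly that, by construction, $f_\lambda(x)$ is the unique positive number satisfying \eqref{eq: GKE for representing function}: uniqueness holds because for any $a,b>0$ the map $y\mapsto a\,g(1/y)+b\,g(x/y)$ is strictly decreasing on $(0,\infty)$, so it has at most one zero. Recall finally $f_0\equiv 1$ and $f_1(x)=x$.

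For the identity $f_{1-\lambda}(x)=x f_\lambda(1/x)$, I would fix $x>0$ and $\lambda\in(0,1)$ and check that the right-hand side solves the equation defining the left-hand side. Write $w:=f_\lambda(1/x)$; evaluating \eqref{eq: GKE for representing function} at the point $1/x$ gives $(1-\lambda)g(1/w)+\lambda g\big(1/(xw)\big)=0$. Put $z:=xw$; substituting, $\lambda g(1/z)+(1-\lambda)g(x/z)=\lambda g\big(1/(xw)\big)+(1-\lambda)g(1/w)=0$, which is exactly \eqref{eq: GKE for representing function} with $\lambda$ replaced by $1-\lambda$, the equation characterizing $f_{1-\lambda}(x)$. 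By uniqueness, $f_{1-\lambda}(x)=z=x f_\lambda(1/x)$. The endpoint cases $\lambda\in\{0,1\}$ follow directly from $f_0\equiv 1$ and $f_1(x)=x$.

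For $\lim_{\lambda\to 0}f_\lambda(x)=1$, I would first record the a priori bound $\min\{1,x\}\le f_\lambda(x)\le\max\{1,x\}$ for all $\lambda\in[0,1]$: since $f_\lambda$ is the representing function of an operator mean (Theorem \ref{prop:GKE mean}), it is operator monotone with $f_\lambda(1)=1$, and comparing $1$ with $x$ via monotonicity of the mean gives the bound; in particular $f_\lambda(x)$ stays in a compact subinterval of $(0,\infty)$. Given any sequence $\lambda_k\downarrow 0$, pass to a subsequence along which $f_{\lambda_k}(x)\to y_\ast\in[\min\{1,x\},\max\{1,x\}]\subset(0,\infty)$. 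Then $1/f_{\lambda_k}(x)\to 1/y_\ast$ and $x/f_{\lambda_k}(x)\to x/y_\ast$, both in $(0,\infty)$, so continuity of $g$ lets me pass to the limit in \eqref{eq: GKE for representing function}, obtaining $g(1/y_\ast)=0$; injectivity of $g$ and $g(1)=0$ force $y_\ast=1$. Since every subsequential limit of the bounded family $\{f_\lambda(x)\}$ is $1$, the limit exists and equals $1$.

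I expect the symmetry relation to be essentially routine (a substitution plus uniqueness of the solution). The one delicate point in the limit is guaranteeing that $1/f_\lambda(x)$ and $x/f_\lambda(x)$ remain bounded away from $0$ and $+\infty$ as $\lambda\to 0$, so that continuity of $g$ can be invoked — this is precisely what the estimate $\min\{1,x\}\le f_\lambda(x)\le\max\{1,x\}$ provides; alternatively, one could replace this step by the classical sandwich $[(1-\lambda)+\lambda x^{-1}]^{-1}\le f_\lambda(x)\le(1-\lambda)+\lambda x$ between the weighted harmonic and arithmetic means, from which the limit follows by squeezing.
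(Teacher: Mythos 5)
Your proof is correct. The first assertion is handled exactly as in the paper: substitute $1/x$ into \eqref{eq: GKE for representing function} and identify $xf_{\lambda}(1/x)$ as the (unique) solution of the GKE with weights $(\lambda,1-\lambda)$. For the limit $\lim_{\lambda\to 0}f_{\lambda}(x)=1$, however, you take a genuinely different route. The paper first computes $\lim_{\lambda\to 1}f_{\lambda}^{-1}(x)=xg^{-1}(0)=x$ from Proposition \ref{inverse function}, upgrades this to $\lim_{\lambda\to 1}f_{\lambda}(x)=x$ by invoking the equicontinuity of the family $\{f_{\lambda}\}$ on bounded intervals (\cite[Lemma 3.6]{HarXiv1711.10170}), and only then uses the symmetry $f_{\lambda}(x)=xf_{1-\lambda}(1/x)$ to pass from $\lambda\to 1$ to $\lambda\to 0$. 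You instead work directly at $\lambda\to 0$: the elementary bound $\min\{1,x\}\leq f_{\lambda}(x)\leq\max\{1,x\}$ (or the weighted arithmetic--harmonic sandwich, which is the scalar case of Lemma \ref{lem:upper-lower bounds} and follows from $1-x^{-1}\leq g(x)\leq x-1$) keeps $f_{\lambda}(x)$ in a compact subinterval of $(0,\infty)$, so along any subsequential limit $y_{*}$ you may pass to the limit in \eqref{eq: GKE for representing function} by continuity of $g$, getting $g(1/y_{*})=0$ and hence $y_{*}=1$ by strict monotonicity of the non-constant operator monotone function $g$. This is more elementary and self-contained: it needs neither Proposition \ref{inverse function} nor the external equicontinuity lemma, and your alternative squeezing argument via the sandwich even dispenses with the subsequence extraction (using that $f_{\lambda}'(1)=\lambda$, as guaranteed by Theorem \ref{prop:GKE mean}). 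What the paper's route buys in exchange is the byproduct $\lim_{\lambda\to 1}f_{\lambda}(x)=x$, though that also follows from your limit together with the symmetry relation.
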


\begin{proof}
Since $f_{\lambda}$ is the representing function 
of $\sigma_{g}$, the following GKE holds
for all $x\in (0,\infty)$.
$$ (1-\lambda) g\left( \frac{1}{f_{\lambda}(x)}\right)+
\lambda g\left( \frac{x}{f_{\lambda}(x)}\right)=0.$$
By replacing $x$ into $1/x$, it is equivalent to
$$ (1-\lambda) g\left( 
\frac{x}{xf_{\lambda}(1/x)}\right)+
	\lambda g\left( \frac{1}{xf_{\lambda}(1/x)}\right)=0.$$
Hence a function $xf_{\lambda}(1/x)$ should be 
the representing function of $\sigma_{g}$ with 
a probability vector $(\lambda, 1-\lambda)$, i.e.,
$f_{1-\lambda}(x)=xf_{\lambda}(1/x)$.

Next, we shall show $\lim_{\lambda\to 0}f_{\lambda}(x)
=1$ holds for all $x\in (0,\infty)$.
By Proposition \ref{inverse function}, 
we have
$$ \lim_{\lambda\to 1}f_{\lambda}^{-1}(x)=
xg^{-1}(0)=x,$$
where the last equality follows from 
$g(1)=0$.
By \cite[Lemma 3.6]{HarXiv1711.10170},
$f_{\lambda}$ is equicontinuous
on any bounded interval in $(0,\infty)$, 
and we have
$$  \lim_{\lambda\to 1}f_{\lambda}(x)=
\lim_{\lambda,\mu\to 1}f_{\lambda}(f_{\mu}^{-1}(x))
=
\lim_{\lambda\to 1}f_{\lambda}(f_{\lambda}^{-1}(x))
=x.$$
Therefore we have
$$ \lim_{\lambda\to 0}f_{\lambda}(x)=
\lim_{\lambda\to 0}
xf_{1-\lambda}(1/x)=1.$$
\end{proof}

\begin{proof}[Proof of Proposition \ref{return to g}]
First of all, $g\in \mathcal{L}$ is at least twice
differentiable since $g$ is an 
operator monotone function.
By $g\in \mathcal{L}$ and Proposition \ref{inverse function}, the 
representing function $f_{\lambda}$ is 
at least twice differentiable on 
$\lambda\in (0,1)$.
By differentiating 
\eqref{eq: GKE for representing function}
both sides in $\lambda$, we have
\begin{align*}
-g\left(\frac{1}{f_{\lambda}(x)}\right)
& +(1-\lambda)
g'\left(\frac{1}{f_{\lambda}(x)}\right)
\left(-\frac{1}{f_{\lambda}(x)^{2}}\right)
\frac{\partial}{\partial \lambda}f_{\lambda}(x) \\
 & +  
g\left(\frac{x}{f_{\lambda}(x)}\right)
+\lambda g'\left(\frac{x}{f_{\lambda}(x)}\right)
\left(\frac{-x}{f_{\lambda}(x)^{2}}\right)
\frac{\partial}{\partial \lambda}f_{\lambda}(x) =0.
\end{align*}
Here we have 
$\lim_{\lambda\to 0} f_{\lambda}(x)\to 1$
by Lemma \ref{lem:limit}, 
$g(1)=0$ and $g'(1)=1$, we have
$$ \left. \frac{\partial}{\partial \lambda}f_{\lambda}(x)
\right|_{\lambda=0}=g(x). $$
\end{proof}

\section{The Ando-Hiai inequalities for the solution of 
the GKE}

In this section, we shall show extensions of 
the Ando-Hiai inequalities. To prove them, the following 
result is very important.

\begin{thm}\label{thm1}
Let $g\in \mathcal{L}$, $\mathbb{A}=
(A_{1},...,A_{n})\in \mathcal{P}^{n}$ and 
$\omega=(w_{1},...,w_{n})\in \Delta_{n}$. 
Then the following hold.
\begin{itemize}
\item[(1)] $\sum_{i=1}^{n}w_{i}g(A_{i})\geq 0$ implies
$ \sigma_{g}(\omega; \mathbb{A})\geq I,$ and 
\item[(2)] $\sum_{i=1}^{n}w_{i}g(A_{i})\leq 0$ implies
$ \sigma_{g}(\omega; \mathbb{A})\leq I.$
\end{itemize}
\end{thm}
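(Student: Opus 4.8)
The plan is to exhibit $\sigma_g(\omega;\mathbb A)$ as the unique fixed point of an explicit monotone map built from a Kubo--Ando operator mean, and then to run a monotone iteration started at $I$. The scalar case is the guiding picture: since $g\in\mathcal L$ is non-constant operator monotone, hence strictly increasing, the root $s$ of $\sum_i w_i g(a_i/s)=0$ satisfies $s\ge 1$ precisely when $\sum_i w_i g(a_i)\ge 0$, and the theorem is the operator analogue of this. One should not expect a naive monotonicity argument, since $X\mapsto\sum_i w_i g(X^{-1/2}A_iX^{-1/2})$ is \emph{not} operator decreasing in $X$ in general; the fixed-point formulation is what tames the noncommutativity.

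I would first treat the case $g(0^+):=\lim_{x\searrow 0}g(x)>-\infty$. Choosing $c>0$ small enough that $h:=1+cg$ is nonnegative on $(0,\infty)$, the function $h$ is operator monotone with $h(1)=1$, hence the representing function of a Kubo--Ando operator mean $\sigma_h$ with $\sigma_h(X,A)=X^{1/2}h(X^{-1/2}AX^{-1/2})X^{1/2}$ for $X\in\mathcal P$. Set $T(X)=\sum_{i=1}^n w_i\,\sigma_h(X,A_i)$. Then $T(X)=X$ is equivalent to $\sum_i w_i h(X^{-1/2}A_iX^{-1/2})=I$, i.e.\ to $\sum_i w_i g(X^{-1/2}A_iX^{-1/2})=0$, so by the uniqueness of the GKE solution the only positive fixed point of $T$ is $\sigma_g(\omega;\mathbb A)$. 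The operator-mean axioms for $\sigma_h$ give: $T$ is monotone; $T$ is continuous along monotone sequences; and, with $\delta=\min\{1,\min_i\inf\sigma(A_i)\}$ and $M=\max\{1,\max_i\|A_i\|\}$, the map $T$ sends the order interval $J=[\delta I,MI]$ into itself (using $\sigma_h(\delta I,\delta I)=\delta I$, $\sigma_h(MI,MI)=MI$, joint monotonicity, and $A_i,I\in J$). Now if $\sum_i w_i g(A_i)\le 0$ then $T(I)=\sum_i w_i h(A_i)=I+c\sum_i w_i g(A_i)\le I$, so the iterates $X_0=I$, $X_{k+1}=T(X_k)$ decrease inside $J$ and converge in the strong operator topology to a fixed point of $T$, necessarily $\sigma_g(\omega;\mathbb A)$; hence $\sigma_g(\omega;\mathbb A)\le I$. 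Symmetrically $\sum_i w_i g(A_i)\ge 0$ forces $T(I)\ge I$, an increasing iteration, and $\sigma_g(\omega;\mathbb A)\ge I$. This settles (1) and (2) for such $g$.

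For a general $g\in\mathcal L$ (notably $g(x)=\log x$, where $g(0^+)=-\infty$ and no global choice of $c$ works) I would approximate by $g_\varepsilon(x):=(1+\varepsilon)g\big(\tfrac{x+\varepsilon}{1+\varepsilon}\big)$; one checks $g_\varepsilon\in\mathcal L$, $g_\varepsilon(0^+)>-\infty$, and $g_\varepsilon\to g$ uniformly on compact subsets of $(0,\infty)$. If $\sum_i w_i g(A_i)<0$ strictly, then $\sum_i w_i g_\varepsilon(A_i)\le 0$ for all small $\varepsilon$ (the spectra of the $A_i$ lie in a fixed compact set), so $\sigma_{g_\varepsilon}(\omega;\mathbb A)\le I$ by the case just proved; letting $\varepsilon\to 0$ and using continuity of the GKE solution with respect to the defining function $g$ (all $\sigma_{g_\varepsilon}(\omega;\mathbb A)$ remain in the fixed interval $J$, and any limit point solves the GKE for $g$ by the uniform convergence $g_\varepsilon\to g$; one may also appeal to the stability results in \cite{HarXiv1711.10170, HSW2018, P2016}) yields $\sigma_g(\omega;\mathbb A)\le I$. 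The strictness is removed by scaling: for $s\in(0,1)$ one has $sA_i<A_i$, hence $\sum_i w_i g(sA_i)<\sum_i w_i g(A_i)\le 0$ strictly, so $\sigma_g(\omega;s\mathbb A)\le I$; since $\sigma_g(\omega;s\mathbb A)=s\,\sigma_g(\omega;\mathbb A)$ by Theorem \ref{prop:GKE mean}(2) applied with the invertible operator $\sqrt s\,I$, letting $s\uparrow 1$ gives $\sigma_g(\omega;\mathbb A)\le I$. Part (1) is entirely symmetric.

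The main obstacle is the last step: matching the approximation $g_\varepsilon\to g$ with the operation ``take the GKE solution'', i.e.\ having a continuity/stability statement for $\sigma_g$ in the argument $g$ that is uniform enough to pass to the limit (transparent in finite dimensions and available from the cited literature, but to be stated with care in the operator setting). The rest is a routine monotone-iteration argument once the connection $h=1+cg$ is identified; the only mild care needed is to verify, from the Kubo--Ando axioms, that $T$ preserves the order interval $J$ and is continuous along monotone sequences.
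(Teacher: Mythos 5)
Your first step (the case $g(0^{+})>-\infty$) is a sound variant of a monotone-iteration argument: encoding the GKE as the fixed-point equation of $T(X)=\sum_i w_i\sigma_h(X,A_i)$ with $h=1+cg$ a Kubo--Ando representing function, checking that $T$ preserves $[\delta I,MI]$, and iterating from $I$. (Two small glosses there: upward continuity of $T$ along the increasing iterates in part (1) is \emph{not} one of the Kubo--Ando axioms, only downward continuity is, so it needs a separate argument on order-bounded sets or a reduction of (1) to (2) via $x\mapsto -g(1/x)$ and $\mathbb{A}\mapsto\mathbb{A}^{-1}$; this is repairable.) The genuine gap is the reduction of the general case to this one. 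Your passage $\varepsilon\to 0$ from $\sigma_{g_\varepsilon}(\omega;\mathbb{A})\le I$ to $\sigma_g(\omega;\mathbb{A})\le I$ requires continuity of the GKE solution in the defining function $g$, and the argument you sketch (``all $\sigma_{g_\varepsilon}$ stay in a fixed order interval, and any limit point solves the limiting GKE'') does not work as stated in the operator setting: order intervals in $\mathcal{P}$ are not compact in any topology in which you can pass to the limit inside $g_\varepsilon\bigl(X_\varepsilon^{-1/2}A_iX_\varepsilon^{-1/2}\bigr)$; weak limit points exist but the functional calculus and the inversion $X\mapsto X^{-1/2}$ are not WOT-continuous. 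So the step is not ``routine plus a citation''; the needed stability statement is exactly what is missing, and it is the crucial case, since $g=\log$ (the Karcher mean) is excluded from your first step. You yourself flag this as the main obstacle, and as written the proof is incomplete precisely there.

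The paper avoids the case split and the approximation altogether by a different fixed-point set-up: since $\sum_i w_ig(A_i)\ge 0$ is a bounded operator, for $\alpha\in(0,1)$ small the operator $-\frac{\alpha}{1-\alpha}\sum_i w_ig(A_i)$ lies in the range of $g$, so there is $X\le I$ with $\alpha\sum_i w_ig(A_i)+(1-\alpha)g(X)=0$; this says $I=\sigma_g\bigl((\alpha\omega,1-\alpha);(\mathbb{A},X)\bigr)$, i.e.\ $I$ solves an \emph{augmented} $(n+1)$-variable GKE. One then iterates $X_0=I$, $X_{k+1}=\sigma_g\bigl((\alpha\omega,1-\alpha);(\mathbb{A},X_k)\bigr)$, which is increasing by monotonicity of $\sigma_g$ (Theorem~\ref{prop:GKE mean}(1)) and bounded above by $\sum_i w_iA_i$ via Lemma~\ref{lem:upper-lower bounds}; at the limit the extra slot contributes $g(I)=0$, so the limit solves the original GKE and dominates $I$. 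Note that the smallness of $\alpha$ plays in the paper exactly the role your constant $c$ plays, but because the iteration map is $\sigma_g$ itself on an augmented tuple rather than a two-variable mean with representing function $1+cg$, no lower bound on $g$ near $0$ is needed and $g=\log$ is covered directly. If you want to salvage your write-up, replacing your approximation step by this auxiliary-operator trick (or proving an honest stability theorem for $g\mapsto\sigma_g$) is the way to do it.
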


To prove Theorem \ref{thm1}, we shall prepare 
the following property of $\sigma_{g}$. 

\begin{lem}\label{lem:upper-lower bounds}
Let $g\in \mathcal{L}$, 
$\omega=(w_{1},...,w_{n})\in \Delta_{n}$ and 
$\mathbb{A}=(A_{1},...,A_{n})\in \mathcal{P}^{n}$. 
Then
$$ \left[ \sum_{i=1}^{n}w_{i}A^{-1}_{i}\right]^{-1}
\leq \sigma_{g}(\omega; \mathbb{A})
\leq \sum_{i=1}^{n}w_{i}A_{i}. $$
\end{lem}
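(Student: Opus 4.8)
The plan is to reduce everything to the scalar two-sided estimate
$$ 1-\frac{1}{x}\ \le\ g(x)\ \le\ x-1 \qquad (x>0), $$
valid for every $g\in\mathcal{L}$, and then to feed it into the GKE through the functional calculus.

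First I would establish this estimate. Since $g$ is operator monotone on $(0,\infty)$ it is in particular real-analytic and concave there, with $g(1)=0$ and $g'(1)=1$; the tangent line of $g$ at $x=1$ is therefore $x\mapsto x-1$, and concavity gives $g(x)\le x-1$ for all $x>0$. For the lower bound, set $\tilde{g}(x):=-g(1/x)$. Because $x\mapsto 1/x$ is operator monotone decreasing and $g$ is operator monotone increasing, $\tilde{g}$ is again operator monotone on $(0,\infty)$, and $\tilde{g}(1)=-g(1)=0$, $\tilde{g}'(1)=g'(1)=1$, so $\tilde{g}\in\mathcal{L}$. Applying the upper bound already obtained to $\tilde{g}$ gives $-g(1/x)\le x-1$, i.e. $g(1/x)\ge 1-x$, and replacing $x$ by $1/x$ yields $g(x)\ge 1-1/x$.

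Next I would exploit the defining equation. Write $X:=\sigma_{g}(\omega;\mathbb{A})\in\mathcal{P}$ and $Y_{i}:=X^{\frac{-1}{2}}A_{i}X^{\frac{-1}{2}}\in\mathcal{P}$, so that $\sum_{i=1}^{n}w_{i}g(Y_{i})=0$ by \eqref{GKE}. From $g(x)\le x-1$ on $(0,\infty)$ and the functional calculus applied to each $Y_{i}$ we get $g(Y_{i})\le Y_{i}-I$, hence
$$ 0=\sum_{i=1}^{n}w_{i}g(Y_{i})\le\sum_{i=1}^{n}w_{i}(Y_{i}-I)=X^{\frac{-1}{2}}\Big(\sum_{i=1}^{n}w_{i}A_{i}\Big)X^{\frac{-1}{2}}-I, $$
and congruence by $X^{\frac{1}{2}}$ gives $\sigma_{g}(\omega;\mathbb{A})\le\sum_{i=1}^{n}w_{i}A_{i}$. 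Symmetrically, from $g(x)\ge 1-1/x$ we get $g(Y_{i})\ge I-Y_{i}^{-1}$, and since $Y_{i}^{-1}=X^{\frac{1}{2}}A_{i}^{-1}X^{\frac{1}{2}}$,
$$ 0=\sum_{i=1}^{n}w_{i}g(Y_{i})\ge\sum_{i=1}^{n}w_{i}(I-Y_{i}^{-1})=I-X^{\frac{1}{2}}\Big(\sum_{i=1}^{n}w_{i}A_{i}^{-1}\Big)X^{\frac{1}{2}}; $$
congruence by $X^{\frac{-1}{2}}$ yields $\sum_{i=1}^{n}w_{i}A_{i}^{-1}\ge X^{-1}$, and taking inverses (which reverses the order of positive invertible operators) gives $\big(\sum_{i=1}^{n}w_{i}A_{i}^{-1}\big)^{-1}\le\sigma_{g}(\omega;\mathbb{A})$, which completes the argument.

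The main obstacle is the scalar estimate, and more precisely its lower half: the bound $g(x)\le x-1$ is the immediate tangent-line consequence of concavity, whereas the bound $g(x)\ge 1-1/x$ requires the observation that $\mathcal{L}$ is stable under the involution $g\mapsto -g(1/\cdot)$ (equivalently, that the harmonic-type function $1-1/x$ is the transpose of the arithmetic-type function $x-1$). Once that is in hand, the remainder is only a routine manipulation of \eqref{GKE} via functional calculus, using the congruence structure of $Y_{i}=X^{\frac{-1}{2}}A_{i}X^{\frac{-1}{2}}$ and order-reversal under inversion. One could alternatively try to deduce the lemma from a comparison principle of the form ``$g_{1}\le g_{2}\Rightarrow\sigma_{g_{1}}\le\sigma_{g_{2}}$'' together with the identification of $1-1/x$ and $x-1$ with the harmonic and arithmetic means, but no such principle is recorded in the material above, so the direct argument seems preferable.
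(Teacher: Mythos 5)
Your proposal is correct and follows essentially the same route as the paper: establish the scalar bound $1-x^{-1}\le g(x)\le x-1$ for $g\in\mathcal{L}$, then plug $Y_i=X^{\frac{-1}{2}}A_iX^{\frac{-1}{2}}$ into the GKE and use functional calculus, congruence and order-reversal under inversion. The only difference is that the paper simply cites this scalar estimate from P\'alfia's paper (inequality (18) there), whereas you derive it yourself from concavity of operator monotone functions and the involution $g\mapsto -g(1/\cdot)$, which is a valid (and self-contained) way to obtain it.
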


\begin{proof}
We note that for each $g\in \mathcal{L}$, 
$$ 1-x^{-1}\leq g(x)\leq x-1 $$
holds for all $x\in (0,\infty)$ \cite[(18)]{P2016}.
Let $X=\sigma_{g}(\omega; \mathbb{A})$. Then we have
$$0 = \sum_{i=1}^{n}w_{i}g(X^{\frac{-1}{2}}A_{i}
X^{\frac{-1}{2}})
 \leq
\sum_{i=1}^{n}w_{i}(X^{\frac{-1}{2}}A_{i}X^{\frac{-1}{2}}-I), $$
i.e., $X\leq \sum_{i=1}^{n} w_{i}A_{i}$.
The rest of the part can be shown in the same way 
and using 
$g(x)\geq 1-x^{-1}$.
\end{proof}

\begin{proof}[Proof of Theorem \ref{thm1}]
Proof of (1). 
Assume that $\sum_{i=1}^{n}w_{i}g(A_{i})\geq 0$ holds.
We note that $g$ is invertible because 
it is an operator monotone function. Moreover
by $g(1)=0$, there exists $X\leq I$ such that 
$$ \alpha \sum_{i=1}^{n}w_{i}g(A_{i})+
(1-\alpha)g(X)=0 $$
for a sufficiently small $\alpha\in (0,1)$.
Since we can consider the above equation
as a GKE, we have 
$$
I  = \sigma_{g}\left( \left(\alpha \omega, 
1-\alpha \right);
(\mathbb{A},X)\right),$$
where $(\alpha \omega, 1-\alpha)=
(\alpha w_{1},...,\alpha w_{n},1-\alpha)\in 
\Delta_{n+1}$
and $(\mathbb{A},X)=(A_{1},...,A_{n},X)\in 
\mathcal{P}^{n+1}$.
Here we define an operator sequence $\{X_{k}\}
\subset \mathcal{P}$ by
$$ X_{0}=I,\ X_{k+1}=\sigma_{g}\left( 
\left(\alpha \omega,
1-\alpha \right); (\mathbb{A}, X_{k})\right). $$
Then 
\begin{align*}
X_{0}= I & = \sigma_{g}\left( 
\left(\alpha \omega, 1-\alpha \right);
(\mathbb{A},X)\right)\\
& \leq 
\sigma_{g}\left( \left(\alpha \omega, 
1-\alpha \right);
(\mathbb{A},I)\right)=X_{1} \\
& \leq 
\sigma_{g}\left( 
\left(\alpha \omega, 1-\alpha \right);
(\mathbb{A},X_{1})\right)=X_{2} 
 \leq \cdots \leq X_{n}, 
\end{align*}
where the inequalities hold by 
operator monotonicity of $\sigma_{g}$, i.e., 
Theorem \ref{prop:GKE mean} (1).
By Lemma \ref{lem:upper-lower bounds}, we have 
$$
X_{k}  \leq 
\sigma_{g}\left( 
\left(\alpha \omega, 1-\alpha \right);
(\mathbb{A},X_{k})\right)
 \leq 
\alpha \sum_{i=1}^{n}w_{i}A_{i}+(1-\alpha) X_{k}, 
$$
and we have $X_{k}\leq \sum_{i=1}^{n}w_{i}A_{i}$ for all
$k=1,2,...$. Hence 
there exists a unique limit point 
$\lim_{k\to \infty}X_{k}=X_{\infty}\in \mathcal{P}$.
It satisfies 
$$ X_{\infty} =
\sigma_{g}\left( \left(\alpha \omega, 
1-\alpha \right);
(\mathbb{A},X_{\infty})\right), $$
and then we have
$$ \sum_{i=1}^{n}w_{i}g(X_{\infty}^{\frac{-1}{2}}A_{i}
X_{\infty}^{\frac{-1}{2}})=0, $$
that is, 
$$ I\leq X_{\infty}=\sigma_{g}\left( \omega;
\mathbb{A}\right). $$

Proof of (2) is shown in the same way by using 
$[\sum_{i=1}^{n}w_{i}A_{i}^{-1}]^{-1}\leq 
\sigma_{g}(\omega; \mathbb{A})$. 
\end{proof}

Using Theorem \ref{thm1}, we can get an elementary 
property of the solution of the GKE.

\begin{thm}\label{monotonicity of g}
Let $f,g\in \mathcal{L}$. Then $g(x)\leq f(x)$ holds for
all $x\in (0,\infty)$ if and only if 
$\sigma_{g}(\omega; \mathbb{A})\leq 
\sigma_{f}(\omega; \mathbb{A})$ holds for all 
$\omega\in \Delta_{n}$ and $\mathbb{A}\in 
\mathcal{P}^{n}$.
\end{thm}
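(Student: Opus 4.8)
The plan is to establish the two implications separately. The forward implication ($g\le f\Rightarrow\sigma_g\le\sigma_f$) rests on Theorem~\ref{thm1} combined with the congruence invariance in Theorem~\ref{prop:GKE mean}(2); the converse ($\sigma_g\le\sigma_f\Rightarrow g\le f$) is obtained by testing the hypothesis on the special tuple $(xI,I,\dots,I)$, which reduces the $n$-variable GKE to \eqref{eq: GKE for representing function}, and then differentiating in $\lambda$ at $\lambda=0$ via Proposition~\ref{return to g}.

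For the ``only if'' part, assume $g(x)\le f(x)$ for all $x\in(0,\infty)$. Fix $\omega\in\Delta_n$ and $\mathbb A\in\mathcal P^n$, set $X=\sigma_f(\omega;\mathbb A)\in\mathcal P$ and $B_i=X^{-1/2}A_iX^{-1/2}\in\mathcal P$. By definition of $\sigma_f$ we have $\sum_{i=1}^n w_i f(B_i)=0$; since $g\le f$ pointwise on $(0,\infty)\supset\sigma(B_i)$, the functional calculus gives $g(B_i)\le f(B_i)$, hence $\sum_{i=1}^n w_i g(B_i)\le 0$. Theorem~\ref{thm1}(2) then yields $\sigma_g\bigl(\omega;(B_1,\dots,B_n)\bigr)\le I$. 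Applying Theorem~\ref{prop:GKE mean}(2) with the invertible operator $X^{1/2}$ and using $X^{1/2}B_iX^{1/2}=A_i$, we obtain
$$\sigma_g(\omega;\mathbb A)=X^{1/2}\,\sigma_g\bigl(\omega;(B_1,\dots,B_n)\bigr)\,X^{1/2}\le X^{1/2}IX^{1/2}=X=\sigma_f(\omega;\mathbb A),$$
which is the claimed inequality. (Symmetrically one could start from $X=\sigma_g(\omega;\mathbb A)$ and invoke Theorem~\ref{thm1}(1) instead.)

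For the converse, assume $\sigma_g(\omega;\mathbb A)\le\sigma_f(\omega;\mathbb A)$ for all admissible $\omega,\mathbb A$. Given $x>0$ and $\lambda\in(0,1)$, take $\mathbb A=(xI,I,\dots,I)\in\mathcal P^n$ and $\omega=\bigl(\lambda,\tfrac{1-\lambda}{n-1},\dots,\tfrac{1-\lambda}{n-1}\bigr)\in\Delta_n$. Guessing a scalar solution shows that \eqref{GKE} reduces to \eqref{eq: GKE for representing function}, and by uniqueness (Theorem~B of the excerpt) this scalar is the solution, so $\sigma_g(\omega;\mathbb A)=f_\lambda(x)I$ and $\sigma_f(\omega;\mathbb A)=\tilde f_\lambda(x)I$, where $f_\lambda,\tilde f_\lambda$ are the representing functions of $\sigma_g,\sigma_f$. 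The hypothesis thus forces $f_\lambda(x)\le\tilde f_\lambda(x)$ for all $x>0$ and $\lambda\in(0,1)$. Since $f_0(x)=\tilde f_0(x)=1$, dividing by $\lambda>0$ and letting $\lambda\searrow 0$ — legitimate because both limits exist by Proposition~\ref{return to g} — gives
$$g(x)=\left.\frac{\partial}{\partial\lambda}f_\lambda(x)\right|_{\lambda=0}\le\left.\frac{\partial}{\partial\lambda}\tilde f_\lambda(x)\right|_{\lambda=0}=f(x)$$
for every $x\in(0,\infty)$, completing the proof.

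The genuinely routine pieces are the functional-calculus monotonicity and the congruence computation in the forward direction. The step that needs the most care is the converse: one must notice that comparing the solutions of the full $n$-variable GKE already pins down an ordering of the two-variable representing functions, and then justify passing to the one-sided derivative at $\lambda=0$ — which is precisely what Proposition~\ref{return to g} (and the differentiability of $\lambda\mapsto f_\lambda(x)$ underlying it) provides.
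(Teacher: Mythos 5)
Your proof is correct and follows essentially the same route as the paper: the forward implication via Theorem~\ref{thm1} together with congruence invariance (you merely start from $\sigma_f$ and use part (2) instead of part (1)), and the converse by comparing representing functions and letting $\lambda\searrow 0$ through Proposition~\ref{return to g}. Your explicit reduction with the tuple $(xI,I,\dots,I)$ just spells out the step the paper summarizes as ``it is enough to consider the two operators case.''
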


\begin{proof}
Proof of $(\Longrightarrow)$.
Let $\omega=(w_{1},...,w_{n})\in \Delta_{n}$,  
$\mathbb{A}=(A_{1},...,A_{n})\in \mathcal{P}^{n}$
and $X=\sigma_{g}(\omega;\mathbb{A})$.
Assume that $g(x)\leq f(x)$ holds for
all $x\in (0,\infty)$. Then 
$$ 0=\sum_{i=1}^{n}w_{i}g(X^{\frac{-1}{2}}A_{i}
X^{\frac{-1}{2}})\leq 
\sum_{i=1}^{n}w_{i}f(X^{\frac{-1}{2}}A_{i}
X^{\frac{-1}{2}}). $$
By Theorem \ref{thm1} (1), we have
$I\leq \sigma_{f}(\omega; X^{\frac{-1}{2}}\mathbb{A}
X^{\frac{-1}{2}})=
X^{\frac{-1}{2}} \sigma_{f}(\omega; \mathbb{A})
X^{\frac{-1}{2}}, $ i.e., 
$$ \sigma_{g}(\omega;\mathbb{A})=X\leq 
\sigma_{f}(\omega; \mathbb{A}). $$

\medskip

Proof of $(\Longleftarrow)$.
It is enough to consider the two operators case.
For $\lambda\in [0,1]$, let $r_{g,\lambda}$ 
and $r_{f,\lambda}$
are representing functions of 
$\lambda$-weighted operator means 
$\sigma_{g}$ and $\sigma_{f}$,
respectively.
Then $r_{g,\lambda}(x)\leq r_{f,\lambda}(x)$ holds for all 
$x\in (0,\infty)$ and $\lambda\in [0,1]$, 
and we have
$$ \frac{r_{g,\lambda}(x)-1}{\lambda} 
\leq \frac{r_{f,\lambda}(x)-1}{\lambda} $$
holds for all $x\in (0,\infty)$ and $\lambda\in (0,1]$.
Let $\lambda\searrow 0$. Then we have $g(x)\leq f(x)$
by Proposition \ref{return to g}.
\end{proof}

Here, we shall show extensions of the Ando-Hiai 
inequality.

\begin{thm}[Extension of the 
Ando-Hiai inequality, 1]\label{thm2}
Let $g\in \mathcal{L}$, $\mathbb{A}\in \mathcal{P}^{n}$
and $\omega \in \Delta_{n}$. If 
$\sigma_{g}(\omega;\mathbb{A})\leq I$
holds, then 
$ \sigma_{g_{p}}(\omega; \mathbb{A}^{p})\leq I$
holds for all $p\geq 1$, where 
$g_{p}(x):=pg(x^{1/p})$.
Moreover the representing function of the
$\lambda$-weighted operator mean
$ \sigma_{g_{p}}$ is $f_{p,\lambda}(x):=
f_{\lambda}(x^{1/p})^{p}$ for all $\lambda\in [0,1]$,
where $f_{\lambda}$ is the representing function of 
the $\lambda$-weighted operator mean $\sigma_{g}$.
\end{thm}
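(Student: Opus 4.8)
The statement has a preliminary (that $\sigma_{g_p}$ makes sense), a ``moreover'' clause, and a main inequality; I would take them in that order.

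First, $g_p(x)=pg(x^{1/p})$ lies in $\mathcal{L}$: since $p\ge 1$ the map $x\mapsto x^{1/p}$ is operator monotone on $(0,\infty)$, so the composition $g\circ(x^{1/p})$ and its positive multiple $g_p$ are operator monotone as well, while $g_p(1)=pg(1)=0$ and $g_p'(x)=g'(x^{1/p})x^{1/p-1}$ give $g_p'(1)=g'(1)=1$. Thus Theorem~B and Theorem~\ref{prop:GKE mean} apply to $g_p$, and $\sigma_{g_p}$ is a well-defined, unique, monotone, congruence-covariant mean whose two-variable restriction is a $\lambda$-weighted operator mean. For the representing function I would substitute $y=f_{p,\lambda}(x)$ into \eqref{eq: GKE for representing function} written for $g_p$, namely $(1-\lambda)g_p(1/y)+\lambda g_p(x/y)=0$; unfolding $g_p$ and dividing by $p$ turns this into $(1-\lambda)g(y^{-1/p})+\lambda g(x^{1/p}y^{-1/p})=0$, which after the substitution $z=y^{1/p}$ is exactly \eqref{eq: GKE for representing function} for $g$ at the point $x^{1/p}$. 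Hence $z=f_{\lambda}(x^{1/p})$, so $f_{p,\lambda}(x)=y=f_{\lambda}(x^{1/p})^{p}$; this is the ``moreover'' clause and simultaneously re-verifies that $f_{p,\lambda}$ really is the representing function of $\sigma_{g_p}$.

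For the inequality I would normalise and invoke Theorem~\ref{thm1}. Set $X=\sigma_{g}(\omega;\mathbb{A})\le I$; since $X$ solves \eqref{GKE} we have $\sum_{i}w_{i}g(X^{-1/2}A_{i}X^{-1/2})=0$, and $0<X\le I$ forces $X^{p}\le X\le I$. By Theorem~\ref{prop:GKE mean}(2), $\sigma_{g_p}(\omega;\mathbb{A}^{p})\le X^{p}$ is equivalent to $\sigma_{g_p}(\omega;X^{-p/2}\mathbb{A}^{p}X^{-p/2})\le I$, for which Theorem~\ref{thm1}(2) (applied to $g_p$) asks only for the single estimate $\sum_{i=1}^{n}w_{i}\,g\big((X^{-p/2}A_{i}^{p}X^{-p/2})^{1/p}\big)\le 0$. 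Comparing with $\sum_{i}w_{i}g(X^{-1/2}A_{i}X^{-1/2})=0$, this would follow from the slot-wise inequality $(X^{-p/2}A_{i}^{p}X^{-p/2})^{1/p}\le X^{-1/2}A_{i}X^{-1/2}$ together with operator monotonicity of $g$; combined with $X^{p}\le I$ it gives $\sigma_{g_p}(\omega;\mathbb{A}^{p})\le I$.

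I expect the slot-wise comparison to be the real obstacle, and not to hold in that form. Writing $V=X^{-1/2}\ge I$, it reads $(V^{p}C^{p}V^{p})^{1/p}\le VCV$ for $C=A_{i}\ge 0$ and $p\ge 1$, an Araki--Lieb--Thirring type statement valid only up to log-majorisation: as an operator inequality neither $\le$ nor $\ge$ holds in general (already a $2\times 2$ example fails in both directions), so it cannot be used term by term. The genuine content of the theorem is that the weighted $g$-sum above is $\le 0$ nonetheless, and I would establish it along the lines of the known cases $g(x)=\log x$ (Karcher mean, Theorem~\ref{thqt:exAH1}(1)) and $g(x)=(x^{t}-1)/t$ (power means, Theorem~\ref{thqt:exAH1}(2)--(3)): use the composition law $g_{pq}=(g_q)_{p}$ and $(\mathbb{A}^{s_1})^{s_2}=\mathbb{A}^{s_1 s_2}$ to reduce to $p$ in a small interval $[1,1+\varepsilon]$, and then show that $s\mapsto\sigma_{g_s}(\omega;\mathbb{A}^{s})$ cannot cross the order unit as $s$ increases, via a first-order perturbation of the GKE at a point where $\sigma_{g_s}(\omega;\mathbb{A}^{s})$ touches $I$. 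The relative-operator-entropy identity $\left.\frac{\partial}{\partial\lambda}f_{\lambda}(x)\right|_{\lambda=0}=g(x)$ of Proposition~\ref{return to g} is the natural ingredient for that perturbation computation.
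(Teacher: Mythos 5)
Your treatment of the ``moreover'' clause and of the fact that $g_{p}\in\mathcal{L}$ is correct and matches the paper (the paper obtains $f_{p,\lambda}^{-1}(x)=f_{\lambda}^{-1}(x^{1/p})^{p}$ from Proposition \ref{inverse function}; your direct substitution into \eqref{eq: GKE for representing function} written for $g_{p}$ amounts to the same computation). The main inequality, however, is not proved. You normalise towards the stronger target $\sigma_{g_{p}}(\omega;\mathbb{A}^{p})\leq X^{p}$, which forces the Araki--Lieb--Thirring type slot-wise comparison $(X^{-p/2}A_{i}^{p}X^{-p/2})^{1/p}\leq X^{-1/2}A_{i}X^{-1/2}$; you rightly observe that this fails as an operator inequality, but the replacement you offer (a composition law $g_{pq}=(g_{q})_{p}$, reduction to $p\in[1,1+\varepsilon]$, and a first-order perturbation of the GKE at a point where the mean touches $I$) is only an unexecuted programme, not an argument. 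That is a genuine gap.

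The missing idea is that the weaker target $\sigma_{g_{p}}(\omega;\mathbb{A}^{p})\leq X$ already suffices (since $X\leq I$), and with this normalisation the slot-wise inequality is true precisely because of the hypothesis. Indeed, $X\leq I$ makes $X^{1/2}$ a contraction, so Hansen's inequality \cite{H1979} applied to the operator monotone function $t\mapsto t^{1/p}$ with $C=X^{1/2}$ and $A=X^{-1/2}A_{i}^{p}X^{-1/2}$ gives $A_{i}=(C^{*}AC)^{1/p}\geq C^{*}A^{1/p}C$, i.e.\ $(X^{-1/2}A_{i}^{p}X^{-1/2})^{1/p}\leq X^{-1/2}A_{i}X^{-1/2}$. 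Operator monotonicity of $g$ and the equation satisfied by $X$ then give $0=\sum_{i}w_{i}g(X^{-1/2}A_{i}X^{-1/2})\geq\sum_{i}w_{i}g\bigl((X^{-1/2}A_{i}^{p}X^{-1/2})^{1/p}\bigr)=\tfrac{1}{p}\sum_{i}w_{i}g_{p}(X^{-1/2}A_{i}^{p}X^{-1/2})$, whence Theorem \ref{thm1} (2) applied to $g_{p}$ yields $\sigma_{g_{p}}(\omega;X^{-1/2}\mathbb{A}^{p}X^{-1/2})\leq I$, i.e.\ $\sigma_{g_{p}}(\omega;\mathbb{A}^{p})\leq X\leq I$. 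This is exactly the paper's proof; the hypothesis $\sigma_{g}(\omega;\mathbb{A})\leq I$ is used twice (to make $X^{1/2}$ a contraction so that Hansen's inequality applies, and to pass from $\leq X$ to $\leq I$), and no Araki--Lieb--Thirring statement or reduction to small exponents is needed.
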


We notice that $g_{p}(x)=pg(x^{1/p})\in \mathcal{L}$
for all $p\geq 1$.

By putting $g(x)=\log x$ in Theorem \ref{thm2},
$\sigma_{g}$ coincides with the Karcher mean.
Then we have Theorem \ref{thqt:exAH1} (1).
Moreover put $g(x)=\frac{x^{t}-1}{t}$ in 
Theorem \ref{thm2},
$\sigma_{g}$ coincides with the power mean.
Then we have Theorem \ref{thqt:exAH1} (2).

\begin{proof}[Proof of Theorem \ref{thm2}]
Let $X=\sigma_{g}(\omega; \mathbb{A})\leq I$.
For $p\geq 1$, we have
$$0  = 
\sum_{i=1}^{n}w_{i}g(X^{\frac{-1}{2}}A_{i}X^{\frac{-1}{2}}) 
 \geq 
\sum_{i=1}^{n}w_{i}g((X^{\frac{-1}{2}}A_{i}^{p}X^{\frac{-1}{2}}
)^{\frac{1}{p}})
$$
by Hansen's
inequality \cite{H1979}.
Hence 
$$
0 \geq 
\sum_{i=1}^{n}w_{i}pg((X^{\frac{-1}{2}}A_{i}^{p}X^{\frac{-1}{2}})^{\frac{1}{p}})
=\sum_{i=1}^{n}w_{i}g_{p}(X^{\frac{-1}{2}}A_{i}^{p}X^{\frac{-1}{2}}),
$$
and we have $\sigma_{g_{p}}(\omega; 
X^{\frac{-1}{2}}\mathbb{A}^{p}X^{\frac{-1}{2}})\leq I$
by Theorem \ref{thm1} (2), i.e., 
$$ \sigma_{g_{p}}(\omega; \mathbb{A}^{p})\leq X\leq I$$
for all $p\geq 1$.
 
Let $f_{\lambda}$ and $f_{p, \lambda}$ be 
representing functions of 
$\lambda$-weighted operator means $\sigma_{g}$
and $\sigma_{g_{p}}$, respectively.
We note that the inverse function of 
$g_{p}(x)=pg(x^{1/p})$ is
$\{g^{-1}(\frac{x}{p})\}^{p}$. Hence 
by Proposition \ref{inverse function}, we have 
\begin{align*}
f^{-1}_{p,\lambda}(x) & = 
xg_{p}^{-1}\left( -\frac{1-\lambda}{\lambda}
g_{p}\left(\frac{1}{x}\right)\right)\\
& =
x\left\{ g^{-1}\left( -\frac{1-\lambda}{p\lambda}
\cdot p g\left(\frac{1}{x^{1/p}}\right)\right)\right\}^{p}\\
& =
\left\{ x^{\frac{1}{p}} g^{-1}
\left(-\frac{1-\lambda}{\lambda}
g\left(\frac{1}{x^{1/p}}\right)\right)\right\}^{p}
 =
f^{-1}_{\lambda}(x^{1/p})^{p}.
\end{align*}
Therefore $f_{p,\lambda}(x)=f_{\lambda}(x^{1/p})^{p}$.
\end{proof}

We can prove for the opposite inequalities 
in Theorem \ref{thm2} in the same way.

\medskip
\noindent
{\bf Theorem \ref{thm2}'.}
{\it Let $g\in \mathcal{L}$, $\mathbb{A}\in \mathcal{P}^{n}$
and $\omega \in \Delta_{n}$. If 
$\sigma_{g}(\omega;\mathbb{A})\geq I$
holds, then 
$ \sigma_{g_{p}}(\omega; \mathbb{A}^{p})\geq I$%
holds for all $p\geq 1$, where 
$g_{p}(x):=pg(x^{1/p})$.
Moreover the representing function of the
$\lambda$-weighted operator mean
$ \sigma_{g_{p}}$ is $f_{p,\lambda}(x):=
f_{\lambda}(x^{1/p})^{p}$ for all $\lambda\in [0,1]$,
where $f_{\lambda}$ is the representing function of 
the $\lambda$-weighted operator mean $\sigma_{g}$.}

\medskip

\begin{thm}[Extension of the 
Ando-Hiai inequality, 2]
\label{thm: extensionAH-wada}
Let $g\in \mathcal{L}$.
Assume $f_{\lambda}$ is the representing function of 
the $\lambda$-weighted operator mean 
$\sigma_{g}$. 
Then the following are equivalent.
\begin{itemize}
\item[(1)] $f_{p,\lambda}(x)=f_{\lambda}(x^{1/p})^{p}\leq 
f_{\lambda}(x)$ holds 
for all $p\geq 1$, $\lambda\in [0,1]$ and 
$x\in (0,\infty)$,
\item[(2)] $g_{p}(x)=pg(x^{\frac{1}{p}})\leq g(x)$ 
holds for all $p\geq 1$ and 
$x\in (0,\infty)$,
\item[(3)] $\sigma_{g}(\omega;\mathbb{A})\geq I$
implies $\sigma_{g}(\omega;\mathbb{A}^{p})\geq I$
for all $\omega\in \Delta_{n}$, $\mathbb{A}\in 
\mathcal{P}^{n}$ and $p\geq 1$.
\end{itemize}
\end{thm}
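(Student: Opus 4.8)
The plan is to prove the cycle $(2)\Rightarrow(3)\Rightarrow(1)\Rightarrow(2)$, using Theorem~\ref{thm1} and Proposition~\ref{return to g} as the main engines. The implication $(2)\Rightarrow(3)$ is essentially Theorem~\ref{thm2}$'$ specialized to the case where $g_p$ dominates $g$: if $\sigma_g(\omega;\mathbb{A})\geq I$, then Theorem~\ref{thm2}$'$ gives $\sigma_{g_p}(\omega;\mathbb{A}^p)\geq I$; since $g_p(x)\leq g(x)$ for all $x>0$ by hypothesis (2), Theorem~\ref{monotonicity of g} yields $\sigma_{g_p}(\omega;\mathbb{A}^p)\leq\sigma_g(\omega;\mathbb{A}^p)$, and combining the two inequalities gives $\sigma_g(\omega;\mathbb{A}^p)\geq I$, which is exactly (3).

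For $(3)\Rightarrow(1)$ I would specialize to two operators. Fix $\lambda\in(0,1)$, $x\in(0,\infty)$ and $p\geq 1$. Recall $f_\lambda(x)=\sigma_g((1-\lambda,\lambda);1,x)$, so by the scaling property of the GKE solution (Theorem~\ref{prop:GKE mean}~(2) applied with the scalar $f_\lambda(x)^{-1/2}$), the pair $A=f_\lambda(x)^{-1}$, $B=f_\lambda(x)^{-1}x$ satisfies $\sigma_g((1-\lambda,\lambda);A,B)=I\geq I$. Hypothesis (3) then forces $\sigma_g((1-\lambda,\lambda);A^p,B^p)\geq I$, i.e.\ $f_\lambda(x^p)\cdot f_\lambda(x)^{-p}\geq 1$ after using the scaling property once more to pull out the scalar $f_\lambda(x)^{-p}$; that is, $f_\lambda(x)^p\leq f_\lambda(x^p)$. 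Replacing $x$ by $x^{1/p}$ gives $f_\lambda(x^{1/p})^p\leq f_\lambda(x)$, which is (1). (The boundary cases $\lambda=0,1$ are trivial since $f_0\equiv 1$ and $f_1=\mathrm{id}$.)

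Finally $(1)\Rightarrow(2)$ is obtained by differentiating in $\lambda$ at $\lambda=0$. For fixed $p\geq 1$ and $x>0$, inequality (1) reads $f_{p,\lambda}(x)\leq f_\lambda(x)$ for all $\lambda\in[0,1]$; both sides equal $1$ at $\lambda=0$ by Lemma~\ref{lem:limit} (note $f_{p,0}(x)=f_0(x^{1/p})^p=1$). Dividing by $\lambda>0$ gives $\frac{f_{p,\lambda}(x)-1}{\lambda}\leq\frac{f_\lambda(x)-1}{\lambda}$ and letting $\lambda\searrow 0$, Proposition~\ref{return to g} identifies the right-hand limit as $g(x)$ and the left-hand limit as $\frac{\partial}{\partial\lambda}f_{\lambda}(x^{1/p})^p|_{\lambda=0}=p\,g(x^{1/p})=g_p(x)$ (by the chain rule, since $\frac{\partial}{\partial\lambda}f_\lambda(y)^p|_{\lambda=0}=p\,f_0(y)^{p-1}g(y)=p\,g(y)$). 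Hence $g_p(x)\leq g(x)$, which is (2).

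The step I expect to need the most care is $(3)\Rightarrow(1)$: one must make sure the two-variable reduction is legitimate and that the scalar congruence manipulations via Theorem~\ref{prop:GKE mean}~(2) correctly convert the operator inequality $\sigma_g((1-\lambda,\lambda);A^p,B^p)\geq I$ into the scalar inequality $f_\lambda(x)^p\leq f_\lambda(x^p)$, including checking that $A,B$ as defined lie in $\mathcal{P}$ and that $p$ need not be an integer for $\mathbb{A}^p$ to make sense here. The differentiation step $(1)\Rightarrow(2)$ is routine once Proposition~\ref{return to g} is invoked, and $(2)\Rightarrow(3)$ is a clean combination of the already-proved Theorems~\ref{thm2}$'$ and~\ref{monotonicity of g}.
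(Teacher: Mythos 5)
Your proposal is correct, and two of the three implications are handled differently from the paper (the cycle itself is the same one, just listed in a different order). For (1) $\Rightarrow$ (2) you and the paper do essentially the same thing: compare difference quotients at $\lambda=0$ and invoke Proposition \ref{return to g}; the paper gets the factor $p$ from the elementary inequality $1+p(t-1)\leq t^{p}$, you get it from the chain rule at $f_{0}=1$ --- equivalent. For (2) $\Rightarrow$ (3) the paper argues directly: from $X=\sigma_{g}(\omega;\mathbb{A})\geq I$ it combines (2) with the Hansen--Pedersen inequality $(X^{-1/2}A_iX^{-1/2})^{p}\leq X^{-1/2}A_i^{p}X^{-1/2}$ (valid only for $p\in[1,2]$ since $X^{-1/2}$ is a contraction), applies Theorem \ref{thm1}, and then bootstraps through $[1,2]$, $[1,4]$, \dots to reach all $p\geq 1$; your route instead quotes Theorem \ref{thm2}$'$ to get $\sigma_{g_{p}}(\omega;\mathbb{A}^{p})\geq I$ and then Theorem \ref{monotonicity of g} with $g_{p}\leq g$ to conclude $\sigma_{g}(\omega;\mathbb{A}^{p})\geq \sigma_{g_{p}}(\omega;\mathbb{A}^{p})\geq I$ directly for every $p\geq1$, which is shorter, avoids the iteration, and is non-circular since both cited theorems precede this one. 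For (3) $\Rightarrow$ (1) the paper simply cites \cite{W2014}; you make the argument explicit by feeding the scalar pair $A=f_{\lambda}(x)^{-1}$, $B=f_{\lambda}(x)^{-1}x$ (for which the mean equals $I$ by homogeneity, i.e.\ Theorem \ref{prop:GKE mean} (2)) into (3) and pulling the scalar back out to get $f_{\lambda}(x)^{p}\leq f_{\lambda}(x^{p})$; this is sound, the fractional power $\mathbb{A}^{p}$ poses no issue, and the only point worth stating is how the two-variable instance of (3) is obtained (either read (3) as quantified over all $n$, or repeat the second operator with weights $(1-\lambda,\lambda/(n-1),\dots,\lambda/(n-1))$, which reduces the GKE to the two-variable one), plus the trivial cases $\lambda=0,1$ which you note. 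So your version buys a self-contained proof of the implication the paper outsources, and a cleaner derivation of (2) $\Rightarrow$ (3), at the price of leaning on Theorems \ref{thm2}$'$ and \ref{monotonicity of g} rather than arguing from the GKE directly.
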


For the case of two operators, Theorem 
\ref{thm: extensionAH-wada} coincides with
the opposite inequalities of  
Theorem \ref{Ando-Hiai Wada}
(it was shown in \cite{W2014}).
Moreover, we can obtain a property of 
operator monotone functions in $\mathcal{L}$
in (2) of the above theorem
but it is not given in Theorem \ref{Ando-Hiai Wada}.

\begin{proof}
Proof of (1) $\Longrightarrow$ (2).
First, the assumption is equivalent to 
$f_{\lambda}(x)^{p}\leq f_{\lambda}(x^{p})$ for
all $x\in (0,\infty)$.
Since $1+p(x-1)\leq x^{p}$ holds for all $p\geq 1$
and $x\in (0,\infty)$,
we have
$$ p\left(\frac{f_{\lambda}(x)-1}{\lambda}\right)
\leq \frac{f_{\lambda}(x)^{p}-1}{\lambda}
\leq 
\frac{f_{\lambda}(x^{p})-1}{\lambda}
$$
holds for all $p\geq 1$, $\lambda\in (0,1]$
and $x\in (0,\infty)$.
By taking a limit $\lambda \searrow 0$, we have 
$pg(x)\leq g(x^{p}) $ by Proposition \ref{return to g},
i.e., $g_{p}(x)=pg(x^{\frac{1}{p}})\leq g(x)$ for all $p\geq 1$
and $x\in (0, \infty)$.

\medskip

Proof of (2) $\Longrightarrow$ (3).
Let $X=\sigma_{g}(\omega; \mathbb{A})\geq I$.
For $p\in [1,2]$, 
\begin{align*}
0  = 
\sum_{i=1}^{n}w_{i}pg(X^{\frac{-1}{2}}
A_{i}X^{\frac{-1}{2}}) 
& \leq 
\sum_{i=1}^{n}w_{i}g((X^{\frac{-1}{2}}
A_{i}X^{\frac{-1}{2}})^{p}) \qquad (\text{by (2)})\\
& \leq 
\sum_{i=1}^{n}w_{i}g(X^{\frac{-1}{2}}
A_{i}^{p}X^{\frac{-1}{2}}),
\end{align*}
where the last inequality holds by 
Hansen and Pedersen's 
inequality \cite{HP1982}. 
Hence by Theorem \ref{thm1} (1), we have
$I\leq \sigma_{g}(\omega; X^{\frac{-1}{2}}
\mathbb{A}^{p}X^{\frac{-1}{2}})$, i.e., 
$$ I\leq X\leq \sigma_{g}(\omega; \mathbb{A}^{p}).$$
Applying the same way to 
$ I\leq \sigma_{g}(\omega; \mathbb{A}^{p})$, 
we have
$ I\leq \sigma_{g}(\omega; \mathbb{A}^{pp'})$
for $p'\in [1,2]$ and $pp'\in [1,4]$.
Repeating this method, we have 
$ I\leq \sigma_{g}(\omega; \mathbb{A}^{p})$
for all $p\geq 1$.

\medskip

Proof of (3) $\Longrightarrow$ (1) is shown in 
\cite{W2014}.
\end{proof}

The opposite inequalities in 
Theorem \ref{thm: extensionAH-wada} 
can be shown in a similar way. But the proof is 
a little bit 
different, we shall give a different part of the proof.

\medskip

\noindent
{\bf Theorem \ref{thm: extensionAH-wada}'.}
{\it Let $g\in \mathcal{L}$.
Assume $f_{\lambda}$ is a representing function of 
a $\lambda$-weighted operator mean 
$\sigma_{g}$. 
Then the following are equivalent.}
\begin{itemize}
\item[(1)] $f_{p,\lambda}(x)=f_{\lambda}(x^{1/p})^{p}\geq 
f_{\lambda}(x)$ {\it holds 
for all $p\geq 1$, $\lambda\in [0,1]$ and 
$x\in (0,\infty)$,}
\item[(2)] {\it $g_{p}(x)=pg(x^{1/p})\geq g(x)$ for all 
$p\geq 1$ and $x\in (0,\infty)$,}
\item[(3)] {\it $\sigma_{g}(\omega;\mathbb{A})\leq I$
implies $\sigma_{g}(\omega;\mathbb{A}^{p})\leq I$
for all $\omega\in \Delta_{n}$, $\mathbb{A}\in 
\mathcal{P}^{n}$ and $p\geq 1$.}
\end{itemize}

It is just an extension of Theorem \ref{Ando-Hiai Wada}.

\begin{proof}
Proof of (1) $\Longrightarrow$ (2).

Since $f_{0}(x^{1/p})^p=f_{0}(x)=1$ always holds 
by \eqref{eq: GKE for representing function},
we may assume $\lambda>0$.
The assumption is equivalent to 
$f_{\lambda}(x)^{p}\geq f_{\lambda}(x^{p})$.
Moreover it is equivalent to 
$$ f^{-1}_{\lambda}(x)^{p}\leq f_{\lambda}^{-1}(x^{p}). $$ 
By Proposition \ref{inverse function}, we have
$$ x^{p}g^{-1}\left(-\frac{1-\lambda}{\lambda}
g\left(\frac{1}{x}\right)\right)^{p}\leq 
x^{p}g^{-1}\left(-\frac{1-\lambda}{\lambda}
g\left(\frac{1}{x^{p}}\right)\right),
$$
i.e., 
$$ g^{-1}\left(-\frac{1-\lambda}{\lambda}
g\left(\frac{1}{x}\right)\right)^{p}\leq 
g^{-1}\left(-\frac{1-\lambda}{\lambda}
g\left(\frac{1}{x^{p}}\right)\right)
$$
holds for all $\lambda\in (0,1)$, $x\in (0,\infty)$ 
and $p\geq 1$.
Put $\alpha=\frac{1-\lambda}{\lambda}>0$ and 
replacing $\frac{1}{x}$ into $x$. Then it 
is equivalent to 
$$ g^{-1}\left(-\alpha
g\left(x\right)\right)^{p}\leq 
g^{-1}\left(-\alpha 
g\left(x^{p}\right)\right)
$$
holds for all $\alpha>0$, $x>0$ and $p\geq 1$.
Hence we have 
$$ \lim_{\alpha \searrow 0}
\frac{g^{-1}(-\alpha g(x))^{p}-1}{-\alpha}
\geq 
\lim_{\alpha \searrow 0}
\frac{g^{-1}(-\alpha g(x^{p}))-1}{-\alpha}.
$$
By L'Hospital's Rule, 
\begin{align*}
\lim_{\alpha\searrow 0}
 \frac{pg^{-1}(-\alpha g(x))^{p-1}}{-1}
\cdot 
\frac{-g(x)}{g'(g^{-1}(-\alpha g(x)))} 
 \geq 
\lim_{\alpha\searrow 0}
\frac{g(x^{p})}{g'(g^{-1}(-\alpha g(x^{p})))}.
\end{align*}
Therefore we have $pg(x)\geq g(x^{p})$ 
holds for all $x\in (0,\infty)$ and $p>1$.

\medskip

Proofs of (2) $\Longrightarrow$ (3) and
(3) $\Longrightarrow$ (1) are 
almost the same as the proof of 
Theorem \ref{thm: extensionAH-wada}.
\end{proof}

\section{A problem -- norm inequalities}
In this section, we shall discuss a norm 
inequality which is related to 
Theorem \ref{thm1}.
For the Karcher mean case, 
the following norm inequality holds:
Let $\mathbb{A}=(A_{1},...,A_{n})\in \mathcal{P}^{n}$ and 
$\omega=(w_{1},...,w_{n})\in \Delta_{n}$.
Then 
$$ \| \Lambda(\omega; \mathbb{A})\| \leq
\| \exp(\sum_{i=1}^{n} w_{i}\log A_{i})\| $$
holds for all unitarily invariant norms.
It was obtained in \cite{HP2012}.
If $\{A_{1},...,A_{n}\}$ is commuting, then 
the equality holds since 
$\Lambda(\omega; \mathbb{A})=
 \exp(\sum_{i=1}^{n} w_{i}\log A_{i})$ holds.

On the other hand, for the power mean case,
the following norm inequality also holds.
Let $t\in (0,1]$, $\mathbb{A}=(A_{1},...,A_{n})\in 
\mathcal{P}^{n}$ and 
$\omega=(w_{1},...,w_{n})\in \Delta_{n}$.
Then 
$$ \| P_{t}(\omega; \mathbb{A})\|_{p} \leq
\| \left[ \sum_{i=1}^{n}w_{i}A_{i}^{t}
\right]^{\frac{1}{t}}\|_{p} $$
holds for Shatten $p$-norms for all $p\geq 1$.
It was obtained in \cite{BLY2016, DDF2017, LY2013}.
If $\{A_{1},...,A_{n}\}$ is commuting, then 
the equality holds since 
$P_{t}(\omega; \mathbb{A})=
 \left[ \sum_{i=1}^{n}w_{i}A_{i}^{t}\right]^{\frac{1}{t}}$ holds.
We remark that it has not been known whether
the above norm inequality holds for all 
unitarily invariant 
norms or not.

In the operator norm case, 
the above inequalities follow from 
Theorem \ref{thm1} for $g(x)=\log x$ and
$g(x)=\frac{x^{t}-1}{t}$.
So one might expect that the following conjecture 
is true.

\medskip

\noindent
{\bf Conjecture.}
{\it Let $g\in \mathcal{L}$, 
$\mathbb{A}=(A_{1},...,A_{n})\in \mathcal{P}^{n}$ and 
$\omega=(w_{1},...,w_{n})\in \Delta_{n}$. Then}
$$ \|\sigma_{g}(\omega; \mathbb{A})\|
\leq \| g^{-1}\left( \sum_{i=1}^{n}w_{i}g(A_{i})\right)\| $$
{\it holds for the operator norm.}

\medskip

The author thinks that the above conjecture 
is not true in general because of the following 
reasons.
\begin{itemize}
\item[(1)] If an $n$-variable function
$f(x_{1},...,x_{n})=
g^{-1}\left( \sum_{i=1}^{n}w_{i}g(x_{i})\right)$
is positively homogeneous, then we can prove 
that the above conjecture is true. 
But $f(x_{1},...,x_{n})$ is not
positively homogeneous, in general.
\item[(2)] For commuting two operators $A$ and 
$B$, 
$$ \sigma_{g}((1-\lambda,\lambda); A,B)=
g^{-1}\left( (1-\lambda)g(A)+\lambda g(B)\right)$$
does not hold in generally. It follows from the following 
result.
\end{itemize}

\begin{thm}
Let $\{\sigma_{\lambda}\}_{\lambda\in [0,1]}$ be a family of  
$\lambda$-weighted 
operator means satisfying the following inequality
$$ \left[ (1-\lambda)A^{-1}+\lambda B^{-1}\right]^{-1}
\leq \sigma_{\lambda} (A,B) 
\leq (1-\lambda)A+\lambda B$$
for all $\lambda\in [0,1]$ and $A,B\in 
\mathcal{P}$.
Then for each $A,B\in \mathcal{P}$ such that
$AB=BA$, the following are equivalent.
\begin{itemize}
\item[(1)] For each $\lambda\in [0,1]$,
$\sigma_{\lambda} (A,B)$ is a 
$\lambda$-weighted 
 power mean, i.e.,
$$ \sigma_{\lambda} (A,B)=
A^{\frac{1}{2}}\left[ (1-\lambda)I+
\lambda ( A^{\frac{-1}{2}}BA^{\frac{-1}{2}})^{t}
\right]^{\frac{1}{t}}A^{\frac{1}{2}},$$
\item[(2)] There exists a real-valued function $g$ on
$(0,\infty)$ such that
$$\sigma_{\lambda}(A,B)=g^{-1}\left((1-\lambda) g(A) +
\lambda g(B) \right)$$
for all $\lambda\in [0,1]$.
\end{itemize}
\end{thm}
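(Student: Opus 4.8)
The plan is to prove the two implications separately, using heavily the fact that $A$ and $B$ commute, so that all functional calculus reduces to scalar computations on the joint spectrum. Since $A$ and $B$ commute, one may simultaneously diagonalize (or, in the general Hilbert space setting, pass to a common maximal abelian subalgebra) and reduce both conditions to pointwise identities in a single real variable: writing $\sigma_\lambda(A,B) = A^{1/2} h_\lambda(A^{-1/2}BA^{-1/2}) A^{1/2}$ where $h_\lambda$ is the representing function, and noting that on commuting operators $A^{1/2} k(A^{-1/2}BA^{-1/2}) A^{1/2} = A\, k(A^{-1}B)$, condition (1) becomes $h_\lambda(x) = [(1-\lambda) + \lambda x^t]^{1/t}$ for all $x>0$, while condition (2) becomes the requirement that there is a function $g$ with $A\,h_\lambda(A^{-1}B) = g^{-1}((1-\lambda)g(A) + \lambda g(B))$ for all commuting $A,B\in\mathcal{P}$.

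For the direction (1) $\Longrightarrow$ (2), I would simply exhibit $g$: if $\sigma_\lambda$ is the $\lambda$-weighted power mean with parameter $t$, take $g(x) = \frac{x^t - 1}{t}$ (or equivalently $g(x)=x^t$, since adding a constant and scaling does not change $g^{-1}((1-\lambda)g(A)+\lambda g(B))$). Then for commuting $A,B$ one checks directly that $g^{-1}((1-\lambda)g(A)+\lambda g(B)) = [(1-\lambda)A^t + \lambda B^t]^{1/t}$, which agrees with the power-mean formula on commuting operators; the bounding inequalities in the hypothesis are exactly the known bounds $[(1-\lambda)A^{-1}+\lambda B^{-1}]^{-1} \le [(1-\lambda)A^t+\lambda B^t]^{1/t} \le (1-\lambda)A+\lambda B$ for $t\in[-1,1]$, so consistency is automatic. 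This direction is routine.

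The substantive direction is (2) $\Longrightarrow$ (1). Assume $g$ exists with $\sigma_\lambda(A,B) = g^{-1}((1-\lambda)g(A)+\lambda g(B))$ on commuting pairs. First I would argue $g$ is continuous and strictly monotone: strict monotonicity follows because $\sigma_\lambda$ is monotone in each argument and $\sigma_\lambda(A,B)$ must genuinely depend on both $A$ and $B$ for $\lambda\in(0,1)$ (using the sandwiching hypothesis to rule out degeneracy), and continuity follows from property (3) of an operator mean. Next, restrict to scalars $A = aI$, $B = bI$: then $\sigma_\lambda(a,b) = g^{-1}((1-\lambda)g(a)+\lambda g(b))$ defines a \emph{quasi-arithmetic mean} in the classical sense, and crucially it must be \emph{homogeneous} of degree one, i.e. $\sigma_\lambda(ca,cb) = c\,\sigma_\lambda(a,b)$, because the operator mean $\sigma_\lambda$ satisfies $\sigma_\lambda(cA,cB) = c\,\sigma_\lambda(A,B)$ (transformer inequality with equality for invertible scalars $X = \sqrt{c}\,I$). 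Now I invoke the classical characterization (Hardy–Littlewood–Pólya / Nagumo–de Finetti–Kolmogorov) of homogeneous quasi-arithmetic means: a quasi-arithmetic mean $g^{-1}((1-\lambda)g(a)+\lambda g(b))$ is positively homogeneous if and only if $g$ is, up to an affine change, either $g(x) = x^t$ for some $t\neq 0$ or $g(x) = \log x$. The $\log x$ case yields the geometric mean, which is the $t\to 0$ power mean; the $x^t$ case yields the $t$-power mean. Finally, since a $\lambda$-weighted operator mean is determined on commuting pairs by its scalar values (via $\sigma_\lambda(A,B) = A\,h_\lambda(A^{-1}B)$ with $h_\lambda(x) = \sigma_\lambda(1,x)$), the established scalar identity upgrades to the operator identity (1).

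\textbf{Main obstacle.} The delicate point is the appeal to the classical homogeneity characterization of quasi-arithmetic means and justifying that its hypotheses really hold here — in particular that $g$ is continuous and strictly monotone (not merely defined), that the mean is symmetric enough in the parameter $\lambda$ to pin down a single $g$ working for all $\lambda$ simultaneously, and that the weight $\lambda$ entering as $(1-\lambda)g(a)+\lambda g(b)$ is compatible with $\sigma_\lambda$ being a $\lambda$-weighted mean in Kubo–Ando's sense (so that $g$ cannot itself depend on $\lambda$). One must also handle the edge cases $\lambda = 0, 1$ separately, where both sides collapse and give no information, and confirm that the parameter $t$ produced is forced into $[-1,1]$ by the sandwiching hypothesis rather than being an arbitrary real exponent.
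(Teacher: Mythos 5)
Your route to (2) $\Longrightarrow$ (1) is genuinely different from the paper's. The paper settles that direction in two lines by citation: from the quasi-arithmetic representation (2) it deduces, via Theorem 4 of \cite{UYY2017}, that the family $\{\sigma_{\lambda}\}$ has the interpolational property, and then concludes from Theorem 6 of \cite{UYY2017} that an interpolational family of operator means lying between the weighted harmonic and arithmetic means must be the power means; the direction (1) $\Longrightarrow$ (2) is dismissed as obvious with $g(x)=\frac{x^{t}-1}{t}$, exactly as you do. You instead bypass the interpolational machinery altogether: restrict (2) to scalar pairs, use positive homogeneity of operator means (equality in the transformer inequality for invertible $X=\sqrt{c}\,I$) to see that the scalar quasi-arithmetic mean is positively homogeneous, invoke the classical Hardy--Littlewood--P\'olya/Nagumo--Kolmogorov characterization to force $g$ to be affine-equivalent to $x^{t}$ or $\log x$, and read off the representing function; the sandwich hypothesis (or operator monotonicity of $f_{\lambda}$) then pins $t\in[-1,1]$. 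This buys a self-contained argument (modulo the classical scalar theorem) and actually yields the stronger conclusion that $\sigma_{\lambda}$ is the power mean on all pairs, while the paper's route buys brevity and stays inside the operator-mean literature. Note also that both arguments implicitly use (2) for commuting pairs other than the single fixed $(A,B)$ of the statement (you need scalar pairs; the interpolational property needs the pairs $(\sigma_{\alpha}(A,B),\sigma_{\beta}(A,B))$), so your reading of the quantifier is the one the paper itself relies on.

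The one step you flag but do not actually justify is the regularity of $g$, and your proposed justification is off: property (3) of a Kubo--Ando mean gives continuity of $\sigma_{\lambda}$ in its operator arguments, not continuity of $g$, and an abstract real-valued $g$ satisfying (2) is a priori neither continuous nor monotone, whereas the Hardy--Littlewood--P\'olya characterization needs both. What the identity does give you directly is injectivity of $g$: if $g(b_{1})=g(b_{2})$ then $\sigma_{\lambda}(a,b_{1})=\sigma_{\lambda}(a,b_{2})$ for all $a$ and $\lambda$, contradicting strict monotonicity of the representing function $f_{\lambda}$ (non-constant, since $f_{\lambda}'(1)=\lambda\in(0,1)$, and non-constant operator monotone functions are strictly increasing); and $g(\sigma_{\lambda}(a,b))=(1-\lambda)g(a)+\lambda g(b)$ is affine in $\lambda$. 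Upgrading these facts to continuity and strict monotonicity of $g$ on $(0,\infty)$ (or arguing that (2) is only meaningful for continuous, strictly monotone $g$, since otherwise $g(A)$, $g(B)$ and $g^{-1}$ are not well defined by functional calculus) is exactly the missing piece; once it is written out, your argument goes through.
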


\begin{proof}
Proof of (1) $\Longrightarrow$ (2) is obvious
by taking $g(x)=\frac{x^{t}-1}{t}$.
So we shall prove (2) $\Longrightarrow$ (1).
Assume that (2) holds. Then 
$\{\sigma_{\lambda}\}_{\lambda\in [0,1]}$ should 
have the interpolatinal property 
\cite[Theorem 4]{UYY2017}.
Moreover since $\sigma_{\lambda}$ 
is an operator mean, 
it should be the power mean by 
\cite[Theorem 6]{UYY2017}.
\end{proof}

\medskip
\noindent
{\bf Acknowledgment.} The author wishes 
his thanks to the anonymous referees
for careful reading the paper and 
several helpful comments 
to improve the paper.

\end{document}